\newtheorem{theorem}{Theorem}[section]
\newtheorem{proposition}[theorem]{Proposition}
\newtheorem{corollary}[theorem]{Corollary}
\theoremstyle{definition}
\newtheorem{definition}[theorem]{Definition}
\newtheorem{example}[theorem]{Example}
\theoremstyle{remark}
\newtheorem{question}[theorem]{Question}
\newcommand{\N}{\mathbb{N}}
\newcommand{\Z}{\mathbb{Z}}
\newcommand{\Sset}{\mathfrak{S}}
\newcommand{\XD}{\mathrm{XDes}}
\newcommand{\Des}{\mathrm{Des}}
\newcommand{\set}[1]{\left\{#1\right\}}
\newcommand{\abs}[1]{\left|#1\right|}
\newcommand{\brac}[1]{\left[#1\right]}
\begin{document}

\title{Permutations with a given $X$-descent set}
\author{Mohamed Omar}
\address{Department of Mathematics \& Statistics. York University. 4700 Keele St. Toronto, Canada. M3J 1P3}
\email{omarmo@yorku.ca}

\subjclass[2020]{05A05, 05A15, 05C20}

\begin{abstract}
Building on the work of Grinberg and Stanley, we begin a systematic study of permutations with a prescribed $X$-descent set. In particular, for a set $X \subseteq \mathbb{N}^2$, and $I \subseteq [n-1]$, we study the permutations $\pi \in \mathfrak{S}_n$ whose $X$-descent set is precisely $I$, meaning $(\pi_i,\pi_{i+1}) \in X$ precisely when $i \in I$. The central focus is enumerating these permutations for a fixed $X,I$ and $n$: this count is denoted by $d_X(I;n)$. We derive a recursion which under expected conditions simplifies to a binomial-type recurrence determined entirely by the values $d_X(\emptyset;n)$.
This extends the work of D\'iaz-Lopez et al.\ on descent polynomials.
The resulting reduction shows that the general statistic $d_X(I;n)$ is typically governed by the ``descent-free'' quantities $d_X(\emptyset;n)$, motivating a closer analysis of these numbers.
We observe that $d_X(\emptyset;n)$ enumerates Hamiltonian paths in a directed graph canonically associated to $X$. We then record several families of sets $X$ for which $d_X(\emptyset;n)$ is explicit or effectively computable. This includes families with periodicity for which transfer matrix methods apply, and families with succession-type relations where inclusion-exclusion applies. We then investigate the typical behavior of $d_X(\emptyset;n)$ from a probabilistic perspective.
\end{abstract}

\keywords{descent set, descent polynomial, X-descent set, consecutive pattern avoidance, directed hamiltonian paths}

\maketitle
\date{\today}

\section{Introduction}

\noindent A ubiquitous statistic in the permutation patterns literature is the descent set of a permutation $\pi=\pi_1\pi_2\cdots\pi_n\in\Sset_n$, which is given by
\[
  \Des(\pi)\coloneqq \set{i\in\brac{n-1}:\ \pi_i>\pi_{i+1}}.
\]
For a fixed set $I\subseteq[n-1]$, let $d(I;n)$ denote the number of permutations in $\Sset_n$ with descent set $I$.
MacMahon proved that, for each fixed $I$, the function $n\mapsto d(I;n)$ is (eventually) polynomial in $n$
\cite{macmahon1915combinatory}.
These polynomials, their binomial basis expansions, and associated recurrences, were studied in detail by
D\'{\i}az-Lopez et al. \cite{descentpolys}. In this article, we study a direct analogue of this function for a generalization of descent sets. Given a set $X\subseteq\N^2$ and  $\pi=\pi_1\pi_2\cdots\pi_n\in\Sset_n$, the \emph{$X$-descent set} of $\pi$ as defined by Grinberg and Stanley \cite{grinberg-stanley-redei-berge} is
\[
  \XD(\pi)\coloneqq \set{i\in\brac{n-1}:\,(\pi_i,\pi_{i+1})\in X}.
\]
In the spirit that inspired the work of D\'iaz-Lopez et al. \cite{descentpolys} and Billey et al. \cite{billey-burdzy-sagan-peaks}, this motivates the central definition of our article.

\begin{definition}
For $I \subseteq [n-1]$, define $\mathcal{D}_X(I;n)$ to be the set of permutations in $\mathfrak{S}_n$ whose $X$-descent set is precisely $I$. In other words,
\[
  \mathcal{D}_X(I;n)\coloneqq \set{\pi\in\Sset_n:\ \XD(\pi)=I}, 
\]
Its size is denoted by $d_X(I;n)\coloneqq \abs{\mathcal{D}_X(I;n)}.$
\end{definition}

\medskip 
\noindent Several of our arguments are more transparent if we allow permutations of an arbitrary finite set of labels, so we have the following related definition.

\begin{definition} Let $S\subseteq\N$ be finite with $\abs{S}=n$. Write $\Sset_S$ for the set of bijections $\pi:[n]\to S$, written in
one-line notation $\pi=\pi_1\cdots\pi_n$. For $I\subseteq[n-1]$, define
\[
  \mathcal{D}_X(I;S)\coloneqq\set{\pi\in\Sset_S:\ \XD(\pi)=I},\qquad d_X(I;S)\coloneqq \abs{\mathcal{D}_X(I;S)}.
\]
When $S=[n]$ we recover $\mathcal{D}_X(I;n)$ and $d_X(I;n)$.
\end{definition}

The notion of an $X$-descent set appears throughout related literature. When $X=\{(i,j) \ : \ i>j\}$ we have  $\mathcal{D}_X(I;n)=\mathcal{D}(I;n)$, the set of permutations in $\mathfrak{S}_n$ with descent set $I$. If $I \subseteq [n-1]$ and $\mathcal{P}(I;n)$ is the set of permutations in $\mathfrak{S}_n$ with peak set $I$, then we can express $\mathcal{P}(I;n)$ in terms of descent sets: again letting $X=\{(i,j) \ : \ i>j\}$, we have
\[
P(I;n) = \bigsqcup_{\substack{J \subseteq [n-1] \\ i \in [2,n-1] \colon i-1 \notin J, \ i \in J}} D_X(J;n).
\]
This follows since a peak at an index is an ascent directly before the index followed by a descent at that index. Many other classes of permutations can be cast as permutations with a given $X$-descent set as well. For instance, reverse succession-free permutations are those permutations $\pi$ for which $\pi_j - \pi_{j+1} \neq 1$ for any $j$, and so permutations that are reverse succession-free at prescribed indices $I$ are precisely those in the set $\mathcal{D}_X(I;n)$ where $X=\{(i+1,i) \ : \ i \in \mathbb{N}\}$ (we will see more on this later).

Our first main goal in this article is to give a recursion for $d_X(I;S)$ that is valid for every relation $X$ and every finite label set $S$.
Under a natural standardization-invariance hypothesis, this recursion collapses to a binomial recursion, recovering the
MacMahon-type polynomiality familiar from descent sets and connecting back to the insertion recurrences in
\cite{descentpolys}. This binomial recurrence hinges on the values $d_X(\emptyset;n)$ and so our second goal is to thoroughly study the special case $I=\emptyset$: we note that $d_X(\emptyset;n)$ counts Hamiltonian
paths in a directed graph determined by $X$.
This interpretation makes it clear why $d_X(\emptyset;n)$ can be difficult to compute in general, but it also brings
in standard tools from graph enumeration and probabilistic combinatorics. This affords us the capability to enumerate $d_X(\emptyset;n)$ for many families of set $X$.

\medskip
The paper is organized as follows.
Section~\ref{sec:recurrence} proves the basic recurrences for $d_X(I;S)$ and discusses the standardization-invariant
setting, including an insertion recursion in the spirit of \cite{descentpolys}. Because Section~\ref{sec:recurrence} reduces the enumeration of $d_X(I;n)$ to the numbers $d_X(\emptyset;n)$, the remaining focus of the paper is enumerating this statistic. 
Section~\ref{sec:hamps} interprets $d_X(\emptyset;n)$ as a Hamiltonian-path count and for certain sets $X$, writes $d_X(\emptyset;n)$ in terms of  formulae of
Grinberg and Stanley.
The rest of Section~\ref{sec:compute} computes $d_X(\emptyset;n)$ for several accessible families, illustrating how combinatorial methods in the literature apply appropriately. This includes applying transfer matrix methods when $X$ has certain periodicity, and standard inclusion-exclusion when $X$ consists of successions. The section ends by establishing a probabilistic argument showing that $d_X(\emptyset;n)$ is typically much larger than polynomial in $n$. We close in Section~\ref{sec:open} with a list of open problems.

\section{Recurrence formulae and polynomiality}\label{sec:recurrence}
\subsection{Base Recurrence}

The articles of MacMahon \cite{macmahon1915combinatory} and D\'{\i}az-Lopez et al. \cite{descentpolys} prove that $d(I;n)$ is polynomial in $n$ by developing a recursive formula for $d(I;n)$ in terms of $d(I';n)$ with $|I'|<|I|$. A natural question is whether $d_X(I;n)$ also admits such a recursive formula that can be used to establish that it is polynomial in $n$. The binomial recursion for descent polynomials is often presented as a recursion that depends only on the size of a
prefix set and a suffix set. For a general relation $X$, this is false: the behavior depends on \emph{which labels} appear in the
prefix. The correct recursion is therefore a sum over label subsets.

\begin{theorem}\label{thm:recursion}
Let $X\subseteq\N^2$. Let $S\subseteq\N$ with $\abs{S}=n$, and let $I\subseteq[n-1]$ be nonempty. 
Write $m=\max(I)$ and $I^-=I\setminus\set{m}$.
Then
\begin{equation}\label{eq:recursion-subset-sum}
d_X(I;S)
=
\sum_{\substack{A\subseteq S\\ \abs{A}=m}}
d_X(I^-;A)\,d_X(\emptyset;S\setminus A)
-
d_X(I^-;S).
\end{equation}
\end{theorem}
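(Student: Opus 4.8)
The plan is to interpret the sum on the right-hand side of \eqref{eq:recursion-subset-sum} as the cardinality of a single family of permutations in which the descent at the boundary position $m$ is left unconstrained, and then to split that family into its two possible boundary behaviors.

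First I would record the factorization at position $m$. Since $m=\max(I)$ and $m\in I\subseteq[n-1]$ we have $1\le m\le n-1$ and $I^-=I\cap[m-1]\subseteq[m-1]$. Each $\pi\in\Sset_S$ decomposes uniquely as a prefix $\pi^{(1)}=\pi_1\cdots\pi_m\in\Sset_A$ with $A=\set{\pi_1,\dots,\pi_m}$, $\abs{A}=m$, and a suffix $\pi^{(2)}=\pi_{m+1}\cdots\pi_n\in\Sset_{S\setminus A}$, giving a bijection $\Sset_S\cong\bigsqcup_{\abs{A}=m}\Sset_A\times\Sset_{S\setminus A}$. The crucial observation is that the $X$-descents of $\pi$ at positions $1,\dots,m-1$ are exactly $\XD(\pi^{(1)})$ and those at positions $m+1,\dots,n-1$ are exactly (a shift of) $\XD(\pi^{(2)})$, while the lone straddling position $m$, governed by the pair $(\pi_m,\pi_{m+1})$, is not determined by $\pi^{(1)}$ or $\pi^{(2)}$ alone.

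Consequently, for fixed $A$ the product $d_X(I^-;A)\,d_X(\emptyset;S\setminus A)$ counts pairs $(\pi^{(1)},\pi^{(2)})$ with $\XD(\pi^{(1)})=I^-$ and $\XD(\pi^{(2)})=\emptyset$, and summing over all $A$ of size $m$ counts the family $\mathcal F$ of all $\pi\in\Sset_S$ whose descents in $[m-1]$ are exactly $I^-$ and which have no descent in $\set{m+1,\dots,n-1}$, with the boundary position $m$ unconstrained. I would then partition $\mathcal F$ according to whether $(\pi_m,\pi_{m+1})\in X$: in that case $\XD(\pi)=I^-\cup\set{m}=I$, so $\pi\in\mathcal D_X(I;S)$; otherwise $\XD(\pi)=I^-$, so $\pi\in\mathcal D_X(I^-;S)$. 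The reverse inclusions give $\abs{\mathcal F}=d_X(I;S)+d_X(I^-;S)$, and rearranging yields \eqref{eq:recursion-subset-sum}.

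The step requiring the most care is the reverse inclusion $\mathcal D_X(I^-;S)\subseteq\mathcal F$, and it is precisely here that the hypothesis $m=\max(I)$ does the work: maximality forces any permutation with $X$-descent set $I^-$ to have no $X$-descent at any position $\ge m$, so that its length-$m$ prefix carries descent set $I^-$ and its suffix carries empty descent set, placing it in $\mathcal F$. Verifying that the two cases of the partition are disjoint and jointly exhaustive, together with this converse, is the only nonroutine bookkeeping; the rest is the standard prefix--suffix counting used for ordinary descent polynomials in \cite{descentpolys}.
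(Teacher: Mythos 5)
Your proposal is correct and follows essentially the same argument as the paper: both count the family of permutations whose length-$m$ prefix has $X$-descent set $I^-$ and whose suffix has empty $X$-descent set in two ways, once via the prefix--suffix decomposition over label subsets $A$ and once by partitioning according to whether $(\pi_m,\pi_{m+1})\in X$. Your emphasis on the reverse inclusion $\mathcal{D}_X(I^-;S)\subseteq\mathcal{F}$ (using maximality of $m$) is a correct and slightly more explicit rendering of the same step in the paper's proof.
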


\begin{proof}
Consider the set
\[
P\coloneqq\Bigl\{\pi=\pi_1\cdots\pi_n\in\Sset_S:\ \XD(\pi_1\cdots\pi_m)=I^-\ \text{and}\ \XD(\pi_{m+1}\cdots\pi_n)=\emptyset\Bigr\}.
\]
For each \(A\subseteq S\) with \(\abs{A}=m\), a permutation \(\pi\in P\) with \(\{\pi_1,\dots,\pi_m\}=A\) consists of a
prefix in \(\mathcal{D}_X(I^-;A)\) together with a suffix in \(\mathcal{D}_X(\emptyset;S\setminus A)\), and these two
choices are independent.  Summing over the choice of \(A\) gives
\[
 \abs{P}=\sum_{\substack{A\subseteq S\\ \abs{A}=m}} d_X(I^-;A)\,d_X(\emptyset;S\setminus A).
\]

On the other hand, if \(\pi\in P\), then all potential \(X\)-descents are prescribed except possibly at the boundary
position \(m\).  The condition \(\XD(\pi_1\cdots\pi_m)=I^-\) forces the descent pattern on the first \(m-1\) positions,
and \(\XD(\pi_{m+1}\cdots\pi_n)=\emptyset\) forces the last \(n-m-1\) positions to be non-descents.  Thus
\(\XD(\pi)\) is either \(I^-\) or \(I\), according as \((\pi_m,\pi_{m+1})\notin X\) or \((\pi_m,\pi_{m+1})\in X\).
Equivalently, $P=\mathcal{D}_X(I^-;S)\,\dot\cup\,\mathcal{D}_X(I;S)$, so
$\abs{P}=d_X(I^-;S)+d_X(I;S).$ Equating the two expressions for \(\abs{P}\) yields~\eqref{eq:recursion-subset-sum}.
\end{proof}

\subsection{Standardization-Invariance}
If $X$ is \emph{standardization-invariant}, meaning that for every finite set $S$ and every $I$ the number $d_X(I;S)$
depends only on $\abs{S}$ (equivalently, $d_X(I;S)=d_X(I;\abs{S})$ after order-preserving relabeling), then
$d_X(I^-;A)=d_X(I^-;m)$ and $d_X(\emptyset;S\setminus A)=d_X(\emptyset;n-m)$ for every $A$ with $\abs{A}=m$.
In this case~\eqref{eq:recursion-subset-sum} collapses to
\begin{equation}\label{eq:recursion-binomial}
d_X(I;n)=\binom{n}{m}\,d_X(I^-;m)\,d_X(\emptyset;n-m)-d_X(I^-;n).
\end{equation}
For ordinary descents this is
essentially MacMahon's recursion \cite{macmahon1915combinatory}. 

Moreover, notice that for a fixed relation $X$ and fixed $I$, the recursion of Theorem~\ref{thm:recursion} can be used as a computational
tool, but it exposes an inherent combinatorial explosion.
Even when $S=[n]$, the right-hand side of~\eqref{eq:recursion-subset-sum} sums over all $\binom{n}{m}$ subsets $A$ of
size $m=\max(I)$ and requires values of $d_X(I^-;A)$ that depend on the \emph{actual} labels in $A$.
In the worst case this forces one to keep track of $d_X(J;A)$ for exponentially many label sets $A$,.
This is another way to see why standardization invariance is the natural dividing line between ``polynomial'' and
``wild'' behavior: once $d_X(J;A)$ depends only on $\abs{A}$, the recursion collapses to a scalar recursion in $n$.

\begin{theorem}\label{thm:recursive}
Assume that $X$ is standardization-invariant.
Let $I=\{i_1<i_2<\cdots<i_k\}\subseteq[n-1]$, set $i_0=0$, and let $n\ge \max(I)+1$.
Then
\begin{equation}\label{eq:IE-expanded}
d_X(I;n)
=
\sum_{\{j_1<\cdots<j_r\}\subseteq\{1,\dots,k\}}
(-1)^{k-r}
\binom{n}{n-i_{j_r},i_{j_r}-i_{j_{r-1}},\ldots,i_{j_2}-i_{j_1}}
\prod_{t=0}^{r} d_X\!\left(\emptyset;\,i_{j_{t+1}}-i_{j_t}\right),
\end{equation}
where we interpret $j_0=0$ and $j_{r+1}=k+1$ with $i_{k+1}=n$.
(When $r=0$ the empty multinomial coefficient is $1$ and the last product is $d_X(\emptyset;n)$.)
\end{theorem}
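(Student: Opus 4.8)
The plan is to prove~\eqref{eq:IE-expanded} by Möbius inversion over the Boolean lattice of subsets of $I$, after establishing a block-decomposition formula for the ``$X$-descent set contained in $K$'' counts. For $K\subseteq[n-1]$ set $N(K;n)\coloneqq\abs{\set{\pi\in\Sset_n:\ \XD(\pi)\subseteq K}}$. Grouping permutations by their exact $X$-descent set gives $N(K;n)=\sum_{L\subseteq K}d_X(L;n)$, so Möbius inversion on the Boolean lattice yields $d_X(I;n)=\sum_{K\subseteq I}(-1)^{\abs{I}-\abs{K}}N(K;n)$. Writing $K=\set{i_{j_1}<\cdots<i_{j_r}}$ with $\set{j_1<\cdots<j_r}\subseteq\set{1,\dots,k}$ and noting $\abs{I}-\abs{K}=k-r$, this already has the outer index set and the signs $(-1)^{k-r}$ of~\eqref{eq:IE-expanded}; it remains only to put $N(K;n)$ into the required product form.

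The key step is the block lemma. Fix $K=\set{a_1<\cdots<a_r}$ with $a_t=i_{j_t}$, and set $a_0=0$, $a_{r+1}=n$. The condition $\XD(\pi)\subseteq K$ says precisely that every non-boundary position is a non-$X$-descent; equivalently, cutting $\pi$ after positions $a_1,\dots,a_r$ produces blocks $\pi_{a_t+1}\cdots\pi_{a_{t+1}}$ each having empty $X$-descent set, with \emph{no} constraint at the cut positions themselves (those indices lie in $K$). Hence a permutation counted by $N(K;n)$ is determined by a choice of which labels land in each block together with an internal arrangement of each block having empty $X$-descent set. The number of label assignments is the multinomial $\binom{n}{a_1,\,a_2-a_1,\,\dots,\,n-a_r}$, and here standardization-invariance is essential: the number of empty-$X$-descent arrangements of a block with a given label set of size $s=a_{t+1}-a_t$ equals $d_X(\emptyset;s)$, independently of which labels appear. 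These choices are independent across blocks, so $N(K;n)=\binom{n}{a_1,\,a_2-a_1,\,\dots,\,n-a_r}\prod_{t=0}^{r}d_X(\emptyset;\,a_{t+1}-a_t)$. Rewriting the multinomial as the product of binomials $\binom{n}{a_r}\binom{a_r}{a_{r-1}}\cdots\binom{a_2}{a_1}$ and substituting $a_t=i_{j_t}$ reproduces exactly the binomial string and the product $\prod_{t=0}^{r}d_X(\emptyset;\,i_{j_{t+1}}-i_{j_t})$ of~\eqref{eq:IE-expanded}, with the sentinel conventions $i_{j_0}=i_0=0$ and $i_{j_{r+1}}=i_{k+1}=n$ matching the outer binomial and the boundary factors.

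Combining the two steps gives the theorem, and I would close by checking the degenerate case $r=0$ (where $K=\emptyset$, the binomial string is empty, and $N(\emptyset;n)=d_X(\emptyset;n)$, consistent with the stated conventions) and the case $k=1$ against~\eqref{eq:recursion-binomial} as a sanity check. I expect the main obstacle to be the block lemma, specifically arguing cleanly that $\XD(\pi)\subseteq K$ imposes no condition at the cut positions and that standardization-invariance is exactly what decouples the blocks into a product; the bookkeeping of the sentinels $i_0,i_{k+1}$ and the reindexing of the multinomial as nested binomials is routine but must be done carefully to land on the stated form. As an alternative that leans directly on the established recursion, one can instead induct on $k=\abs{I}$ and unfold~\eqref{eq:recursion-binomial}: peeling off $m=i_k$ splits $d_X(I;n)$ into a product term $\binom{n}{i_k}\,d_X(I^-;i_k)\,d_X(\emptyset;n-i_k)$ and a term $-d_X(I^-;n)$, and applying the inductive formula to each produces, respectively, exactly the subsets $\set{j_1<\cdots<j_r}\subseteq\set{1,\dots,k}$ that do and do not contain $k$, with the signs $(-1)^{k-r}$ aligning automatically.
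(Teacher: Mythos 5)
Your primary argument is correct, and it takes a genuinely different route from the paper, which proves the identity by induction on $k=\abs{I}$ by repeatedly unfolding the binomial recursion~\eqref{eq:recursion-binomial} (that inductive route is exactly your stated ``alternative'' at the end). Your approach instead sets $N(K;n)=\abs{\set{\pi\in\Sset_n:\XD(\pi)\subseteq K}}$, inverts $N(K;n)=\sum_{L\subseteq K}d_X(L;n)$ over the Boolean lattice, and evaluates $N(K;n)$ by a block decomposition: the containment $\XD(\pi)\subseteq K$ imposes only within-block conditions (no condition at the cut positions, which lie in $K$), and standardization-invariance is precisely what makes each block contribute $d_X(\emptyset;\,a_{t+1}-a_t)$ independently of its label set, giving $N(K;n)=\binom{n}{a_r}\binom{a_r}{a_{r-1}}\cdots\binom{a_2}{a_1}\prod_{t=0}^{r}d_X(\emptyset;\,a_{t+1}-a_t)$. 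This is the classical MacMahon-style derivation, and it is the rigorous version of the heuristic the paper only states informally after the theorem (``an inclusion-exclusion over which of the designated candidate descents actually occur, together with a multinomial choice of which labels land in each block''). What your route buys is a direct, non-inductive proof in which every term of~\eqref{eq:IE-expanded} has a transparent combinatorial meaning and the signs come for free from M\"obius inversion; what the paper's route buys is that it needs no new counting lemma, only the already-established recursion~\eqref{eq:recursion-binomial}, at the cost of some reindexing and sign bookkeeping in the inductive step. Both are complete; your sanity checks at $r=0$ and $k=1$ are consistent with the stated conventions.
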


\begin{proof}
We prove~\eqref{eq:IE-expanded} by induction on \(k=\abs{I}\), iterating the binomial recursion~\eqref{eq:recursion-binomial}.
If \(k=0\), then \(I=\emptyset\) and the sum in~\eqref{eq:IE-expanded} has only the empty choice \(r=0\), which gives
\(d_X(\emptyset;n)\).
If \(k=1\) and \(I=\{m\}\), the two subsets \(\emptyset\) and \(\{1\}\) recover~\eqref{eq:recursion-binomial} with
\(I^-=\emptyset\).

Now let \(k\ge 2\) and assume the formula holds for all sets of size \(k-1\).
We have \(I=\{i_1<i_2<\cdots<i_k\}\subseteq[n-1]\), with \(m=i_k\), and \(I^-=I\setminus\{m\}\).
Standardization invariance gives
\begin{equation}\label{eq:IE-induction-start}
 d_X(I;n)=\binom{n}{m}\,d_X(I^-;m)\,d_X(\emptyset;n-m)\ -\ d_X(I^-;n).
\end{equation}

Apply the induction hypothesis to \(d_X(I^-;m)\) and \(d_X(I^-;n)\).
For \(d_X(I^-;m)\), $I^-=\{i_1<i_2<\cdots<i_{k-1}\}$ and the ambient set has size $m$, so
\[
d_X(I^-;m)
=
\sum_{\substack{\{j_1<\cdots<j_r\}\subseteq [k-1]\\ 0\le r\le k-1}}
(-1)^{k-1-r}\,
\binom{m}{i_{j_r}}\binom{i_{j_r}}{i_{j_{r-1}}}\cdots\binom{i_{j_2}}{i_{j_1}}\,
\prod_{t=0}^{r} d_X(\emptyset;\,i_{j_{t+1}}-i_{j_t}),
\]
where we use the same conventions as in~\eqref{eq:IE-expanded} and interpret \(j_0=0\), \(i_{j_0}=0\), and
\(i_{j_{r+1}}=m\).
Similarly,
\[
d_X(I^-;n)
=
\sum_{\substack{\{j_1<\cdots<j_r\}\subseteq [k-1]\\ 0\le r\le k-1}}
(-1)^{k-1-r}\,
\binom{n}{n-i_{j_r},i_{j_r}-i_{j_{r-1}},\ldots,i_{j_2}-i_{j_1}}\,
\prod_{t=0}^{r} d_X(\emptyset;\,i_{j_{t+1}}-i_{j_t}),
\]
now with \(i_{j_{r+1}}=n\).

Substituting these two expansions into~\eqref{eq:IE-induction-start} produces two sums indexed by subsets of \([k-1]\).
In the first sum, the prefactor \(\binom{n}{m}\,d_X(\emptyset;n-m)\) simply appends \(m=i_k\) as the final selected
descent position and appends the final gap factor \(d_X(\emptyset;n-m)\).  Reindexing by adjoining \(k\) to the subset
\(\{j_1<\cdots<j_r\}\subseteq[k-1]\) therefore yields precisely the terms in~\eqref{eq:IE-expanded} indexed by subsets of
\([k]\) that contain \(k\), with the correct sign \((-1)^{k-(r+1)}=(-1)^{k-1-r}\).
The second sum contributes the terms indexed by subsets of \([k]\) that do not contain \(k\), and the extra minus sign
changes \((-1)^{k-1-r}\) into \((-1)^{k-r}\).
Together these two families of terms give exactly the full sum~\eqref{eq:IE-expanded}.
\end{proof}

When $X$ induces the classical descent set, the factors $d_X(\emptyset;\,i_{j_{t+1}}-i_{j_t})$ in~\eqref{eq:IE-expanded}
are all equal to $1$ (there is a unique increasing permutation of a given length), and the formula reduces to
MacMahon's classical inclusion-exclusion expression for $d(I;n)$ in terms of binomial coefficients
\cite{macmahon1915combinatory}.  In general, the same binomial ``skeleton'' persists, but the block factors
$d_X(\emptyset;\cdot)$ measure how many ways the gaps between descent positions can be filled without creating
additional $X$-descents.  One can view~\eqref{eq:IE-expanded} as an inclusion-exclusion over which of the designated
``candidate descents'' actually occur, together with a multinomial choice of which labels land in each block.
This viewpoint is closely related to the classical inclusion-exclusion expansions for descent-set numbers and to refinements by compositions; see \cite{stanley-ec1}.

\medskip
A different recursion, which avoids cancellation, arises by inserting the largest label $n+1$.
The classical version for ordinary descents is due to D\'{\i}az-Lopez et al.
\cite{descentpolys}. For $X$-descents one needs a hypothesis ensuring that $n+1$ behaves uniformly with respect to $X$.

\begin{proposition}\label{prop:cancellationfree}
Fix $n\ge 1$ and assume that
\begin{equation}\label{eq:maximal-assumption}
(n{+}1,i)\in X\ \text{ and }\ (i,n{+}1)\notin X\qquad\text{for every }i\in[n].
\end{equation}
Let $I=\{i_1<i_2<\cdots<i_\ell\}\subseteq[n]$ and define
\[
I' \coloneqq \set{i_k:\ i_k-1\notin I},\qquad
I''\coloneqq I'\setminus\set{1}.
\]
For each $k\in[\ell]$ set
\[
I_k\coloneqq \set{i_1,\dots,i_{k-1},\,i_k-1,\,i_{k+1}-1,\dots,i_\ell-1}\setminus\set{0},
\]
\[
\widehat{I}_k\coloneqq \set{i_1,\dots,i_{k-1},\,i_{k+1}-1,\dots,i_\ell-1}\setminus\set{0}.
\]
Then
\begin{equation}\label{eq:insertion-recursion}
d_X(I;n{+}1)
=
d_X(I;n)
+
\sum_{i_k\in I''} d_X(I_k;n)
+
\sum_{i_k\in I'} d_X(\widehat{I}_k;n).
\end{equation}
\end{proposition}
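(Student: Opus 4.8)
The plan is to classify each permutation $\pi=\pi_1\cdots\pi_{n+1}\in\Sset_{n+1}$ with $\XD(\pi)=I$ according to the position $p\in[n+1]$ of the largest label $n+1$, and then delete that label to obtain a permutation $\sigma\in\Sset_n$. Since removing $n+1$ from $\pi$ leaves a bijection onto $[n]$, no standardization is required, and $\pi\mapsto(\sigma,p)$ is a bijection from $\Sset_{n+1}$ onto pairs $(\sigma,p)$ with $\sigma\in\Sset_n$ and $p\in[n+1]$. The entire argument is then a bookkeeping computation of which descent sets $\XD(\sigma)$ can occur, organized by $p$, and a matching of the resulting counts against the terms $d_X(I;n)$, $d_X(I_k;n)$, and $d_X(\widehat I_k;n)$.

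First I would record the local effect of the label $n+1$ via~\eqref{eq:maximal-assumption}: in any $\pi$ with $\pi_p=n+1$ the pair $(\pi_{p-1},n+1)$ is never an $X$-descent, while $(n+1,\pi_{p+1})$ always is. Thus an interior position $1<p\le n$ forces a non-descent at $p-1$ and a descent at $p$, so such a $\pi$ can occur only when $p\in I$ and $p-1\notin I$, while the boundaries $p=1$ and $p=n+1$ force $1\in I$ and $n\notin I$ respectively. Reading the construction as insertion, if $\sigma$ has $\XD(\sigma)=J$ then inserting $n+1$ at an interior $p$ yields $\XD=\set{j\in J:\ j\le p-2}\cup\set{p}\cup\set{j+1:\ j\in J,\ j\ge p}$. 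The crucial feature is that this expression does \emph{not} involve the descent status of $\sigma$ at position $p-1$, since the pair $(\sigma_{p-1},\sigma_p)$ is destroyed by the insertion. Hence for each interior $p=i_k$ (necessarily with $i_k\in I'$), the $\sigma$ that produce $\XD=I$ are exactly those whose descent set is one of two sets agreeing off position $p-1$; a short computation identifies these as $\widehat I_k$ and $I_k$, contributing $d_X(\widehat I_k;n)+d_X(I_k;n)$.

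Next I would dispose of the two boundary positions. The case $p=n+1$ contributes $d_X(I;n)$, which is automatically $0$ when $n\in I$, matching the fact that $n+1$ cannot then sit at the end; this furnishes the first term of~\eqref{eq:insertion-recursion}. The case $p=1$ is possible only when $i_1=1$, where one checks $\widehat I_1=I_1=\set{i_2-1,\dots,i_\ell-1}$ and the contribution is $d_X(\widehat I_1;n)$. The final step is to align the index sets: since $i_1-1<\min I$, we always have $i_1\in I'$, and the interior positions with $i_k\ge 2$ are exactly those of $I''=I'\setminus\set{1}$. The interior positions therefore contribute $\sum_{i_k\in I''}\bigl(d_X(I_k;n)+d_X(\widehat I_k;n)\bigr)$, and absorbing the $p=1$ term (when present) restores the missing $k=1$ summand, upgrading the $\widehat I$-sum from $I''$ to $I'$. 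Collecting the $p=n+1$, interior, and $p=1$ contributions gives exactly~\eqref{eq:insertion-recursion}.

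The main obstacle I anticipate is bookkeeping rather than any deep idea: one must notice that an interior deletion erases the descent of $\sigma$ at position $p-1$, so that \emph{both} $I_k$ and $\widehat I_k$ arise as preimage descent sets, and one must carefully reconcile the two summation ranges $I'$ and $I''$ by treating the $i_1=1$ boundary separately. Verifying the set identities $\widehat I_k=\set{i_1,\dots,i_{k-1},i_{k+1}-1,\dots,i_\ell-1}$ and $I_k=\widehat I_k\cup\set{i_k-1}$ (with the deletions of $0$ precisely accounting for the edge case $i_k=1$) is the routine but error-prone part of the proof.
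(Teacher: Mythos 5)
Your proof is correct and follows essentially the same route as the paper's: classify permutations in $\mathcal{D}_X(I;n{+}1)$ by the position of $n{+}1$, delete/insert that label, and observe that the destroyed pair $(\sigma_{p-1},\sigma_p)$ is what makes both $I_k$ and $\widehat I_k$ arise as preimage descent sets. Your handling of the boundary cases $p=1$ and $p=n{+}1$ and the reconciliation of the ranges $I'$ versus $I''$ matches the paper's argument, with the added (correct) remark that the $p=n{+}1$ term vanishes automatically when $n\in I$.
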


\begin{proof}
Let \(\pi=\pi_1\cdots\pi_{n+1}\in\mathcal{D}_X(I;n{+}1)\), and let \(t\) be the position of \(n{+}1\) in \(\pi\).
Delete \(n{+}1\) to obtain a permutation \(\pi'\in\Sset_n\). We are given that every pair of the form \((n{+}1,i)\) is an \(X\)-descent
and every pair of the form \((i,n{+}1)\) is not an \(X\)-descent.
Consequently, the only descent information lost when passing from \(\pi\) to \(\pi'\) comes from the two elements in positions before and after \(n{+}1\); all other pairs are unchanged (up to the index shift after position \(t\)).

If \(t=n{+}1\), then \(n{+}1\) is appended at the end, so $\XD(\pi')=\XD(\pi)=I$.  This accounts for the term \(d_X(I;n)\). Now assume \(t\le n\), so \(n{+}1\) has a successor.  

Since \((n{+}1,\pi_{t+1})\in X\), we have \(t\in I\).
If \(t>1\), then \((\pi_{t-1},n{+}1)\notin X\), so \(t-1\notin I\).  Thus \(t\) is a descent position whose predecessor
is not, i.e.\ \(t\in I' \), and in fact \(t\in I''\) when \(t>1\).
Write \(t=i_k\) accordingly. Deleting \(n{+}1\) removes the instances of \((\pi_{t-1},n{+}1)\) and \((n{+}1,\pi_{t+1})\) and replaces them by the instance \((\pi_{t-1},\pi_{t+1})\), which occurs at position \(t-1\) in \(\pi'\).
All descent positions after $t$ in \(\pi\) shift down by one in \(\pi'\), and the descent at \(t\) disappears because the
instance \((n{+}1,\pi_{t+1})\) is removed.
Therefore \(\XD(\pi')\) is determined by whether the pair $(\pi_{t-1},\pi_{t+1})$ lies in \(X\):
\[
(\pi_{t-1},\pi_{t+1})\in X \Longleftrightarrow t-1\in\XD(\pi') \Longleftrightarrow \XD(\pi')=I_k,
\]
and
\[
(\pi_{t-1},\pi_{t+1})\notin X \Longleftrightarrow t-1\notin\XD(\pi') \Longleftrightarrow \XD(\pi')=\widehat{I}_k.
\]

Conversely, start with \(\pi'\in\mathcal{D}_X(I_k;n)\) and insert \(n{+}1\) at position \(t=i_k\).
This destroys the \(X\)-descent at position \(t-1\) (which is present in \(\pi'\) by definition of \(I_k\)) because
\((\pi'_{t-1},n{+}1)\notin X\), and it creates an \(X\)-descent at position \(t\) because \((n{+}1,\pi'_t)\in X\).
All later descent positions shift up by one.  The result is a permutation in \(\mathcal{D}_X(I;n{+}1)\), and deletion at
position \(t\) recovers \(\pi'\).  The same insertion map sends \(\mathcal{D}_X(\widehat{I}_k;n)\) bijectively onto the
subfamily where the bridged pair is not in \(X\).

Finally, when \(t=1\) we necessarily have \(1\in I\), and deleting the initial \(n{+}1\) simply shifts every descent
position down by one.  This corresponds to the term with \(i_k=1\in I'\), namely \(d_X(\widehat{I}_k;n)\).

We have therefore partitioned \(\mathcal{D}_X(I;n{+}1)\) into disjoint subclasses according to the position of \(n{+}1\)
and the bijections above identify these subclasses with
\(\mathcal{D}_X(I;n)\), \(\mathcal{D}_X(I_k;n)\) for \(i_k\in I''\), and \(\mathcal{D}_X(\widehat{I}_k;n)\) for \(i_k\in I'\). Taking cardinalities gives~\eqref{eq:insertion-recursion}.
\end{proof}

\subsection{Periodic relations.}
A second natural invariance is \emph{periodicity modulo $m$}. In this setting, counting permutations reduces to
counting residue words with prescribed adjacent transitions, a standard finite-state enumeration problem.

\begin{theorem}\label{thm:periodic-reduction}
Let $m\ge 1$ and suppose $X$ is periodic modulo $m$, i.e.\ there exists
$f:(\Z/m\Z)^2\to\{0,1\}$ such that for all distinct $a,b\in\N$,
\[
(a,b)\in X\quad\Longleftrightarrow\quad f(a\bmod m,\ b\bmod m)=1.
\]
Fix $n\ge 1$ and $I\subseteq[n-1]$. For each residue class $r\in\Z/m\Z$ let
\[
\ell_r(n)\coloneqq \abs{\set{t\in[n]: t\equiv r\pmod m}}.
\]
Let $\mathcal{W}_{f,I}(n)$ be the set of words $w=w_1\cdots w_n$ over $\Z/m\Z$ with content
$\abs{\{i:\,w_i=r\}}=\ell_r(n)$ for all $r$, and such that
\[
f(w_i,w_{i+1})=1\ \Longleftrightarrow\ i\in I\qquad\text{for all }i\in[n-1].
\]
Then
\[
d_X(I;n)=\abs{\mathcal{W}_{f,I}(n)}\cdot \prod_{r\in\Z/m\Z}\ell_r(n)!.
\]
\end{theorem}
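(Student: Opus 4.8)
The plan is to exhibit a surjection from $\mathcal{D}_X(I;n)$ onto $\mathcal{W}_{f,I}(n)$ given by reduction modulo $m$, and to show that every fiber of this map has the constant size $\prod_{r}\ell_r(n)!$. The claimed identity then follows immediately by summing the fiber sizes over $\mathcal{W}_{f,I}(n)$.

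First I would introduce the \emph{residue map} $\rho\colon\Sset_n\to(\Z/m\Z)^n$ defined by $\rho(\pi)=(\pi_1\bmod m,\dots,\pi_n\bmod m)$. The crucial observation is that consecutive entries of a permutation are always distinct, so for each $i$ the periodicity hypothesis applies to the pair $(\pi_i,\pi_{i+1})$ and yields $(\pi_i,\pi_{i+1})\in X\iff f(\pi_i\bmod m,\pi_{i+1}\bmod m)=1$. Consequently $i\in\XD(\pi)$ if and only if $f(\rho(\pi)_i,\rho(\pi)_{i+1})=1$, so the condition $\XD(\pi)=I$ is exactly the adjacency condition defining $\mathcal{W}_{f,I}(n)$. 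Moreover, since $\pi$ is a bijection onto $[n]$, each residue $r$ is attained by precisely $\ell_r(n)$ of the entries, which matches the prescribed content. Hence $\rho$ restricts to a well-defined map $\mathcal{D}_X(I;n)\to\mathcal{W}_{f,I}(n)$.

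Next I would analyze the fibers. Fix $w\in\mathcal{W}_{f,I}(n)$. A permutation $\pi\in\Sset_n$ satisfies $\rho(\pi)=w$ exactly when $\pi_i\equiv w_i\pmod m$ for all $i$. For each residue $r$ the content constraint guarantees that $w$ has exactly $\ell_r(n)$ positions equal to $r$, and $[n]$ contains exactly $\ell_r(n)$ values congruent to $r$; so specifying such a $\pi$ amounts to choosing, independently for each $r$, a bijection from the $\ell_r(n)$ positions labeled $r$ to the $\ell_r(n)$ values of residue $r$. This gives $\prod_{r}\ell_r(n)!$ choices, each producing a genuine permutation of $[n]$. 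Since $X$-descents depend only on residues, every permutation in this fiber has $\XD=I$, so the entire fiber lies in $\mathcal{D}_X(I;n)$; in particular each $w$ has a nonempty fiber and $\rho$ is surjective with all fibers of size $\prod_{r}\ell_r(n)!$.

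Summing the constant fiber size over all $w\in\mathcal{W}_{f,I}(n)$ yields $d_X(I;n)=\abs{\mathcal{W}_{f,I}(n)}\cdot\prod_{r}\ell_r(n)!$. The one point requiring care—and where I expect the only real subtlety to lie—is the interaction between distinctness of consecutive entries and the periodicity hypothesis, which is assumed only for distinct $a,b$. This is precisely what ensures that $\rho$ transports the $X$-descent statistic faithfully even when $w_i=w_{i+1}$ occurs, since the underlying values still differ by a nonzero multiple of $m$ and so remain distinct. Once this is verified, the remainder is a routine fiber-counting argument.
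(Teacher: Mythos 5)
Your proposal is correct and follows essentially the same route as the paper: reduce modulo $m$ to get a map onto $\mathcal{W}_{f,I}(n)$, observe that the $X$-descent condition transports to the word condition, and count each fiber as $\prod_r \ell_r(n)!$ by assigning labels within residue classes. Your explicit attention to the distinctness of consecutive permutation entries (needed because the periodicity hypothesis is stated only for distinct $a,b$) is a point the paper passes over silently, and it is handled correctly.
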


\begin{proof}
Consider the map
\[
 w:\Sset_n\longrightarrow (\Z/m\Z)^n,\qquad
 w(\pi)\coloneqq (\pi_1\bmod m,\ldots,\pi_n\bmod m),
\]
For fixed \(n\), the content of \(w(\pi)\) is forced: among the
labels \(1,2,\dots,n\) the residue class \(r\in\Z/m\Z\) occurs exactly \(\ell_r(n)\) times by definition. Now,
for \(\pi\in\Sset_n\) and \(1\le i\le n-1\),
\[
 i\in\XD(\pi)
 \Longleftrightarrow
 (\pi_i,\pi_{i+1})\in X
 \Longleftrightarrow
 f(\pi_i\bmod m,\ \pi_{i+1}\bmod m)=1
 \Longleftrightarrow
 f(w_i(\pi),w_{i+1}(\pi))=1.
\]
Therefore \(\XD(\pi)=I\) holds exactly when \(w(\pi)\) satisfies the
 rule defining \(\mathcal{W}_{f,I}(n)\).  In other words, \(w\) maps \(\mathcal{D}_X(I;n)\) onto
\(\mathcal{W}_{f,I}(n)\). It remains then to count the size of the preimage of any element in \(\mathcal{W}_{f,I}(n)\).  

Fix \(w'=w_1\cdots w_n\in\mathcal{W}_{f,I}(n)\).  For each residue class
\(r\in\Z/m\Z\), the set of labels in \([n]\) congruent to \(r\) modulo \(m\) has size \(\ell_r(n)\), and the word \(w'\)
specifies exactly \(\ell_r(n)\) positions in which residue \(r\) must appear.  To construct a permutation \(\pi\) with
\(w(\pi)=w'\), we must bijectively assign to those positions the \(\ell_r(n)\) distinct integers in \([n]\) that are
congruent to \(r\), in some order.  There are \(\ell_r(n)!\) choices for this assignment, independently for each residue
class \(r\), so every \(w' \in\mathcal{W}_{f,I}(n)\) has exactly \(\prod_{r\in\Z/m\Z}\ell_r(n)!\) preimages under the map
\(w\). Since the size of the preimage of $w'$ is independent of the choice of $w'$, we conclude that
\[
 d_X(I;n)=\abs{\mathcal{W}_{f,I}(n)}\cdot \prod_{r\in\Z/m\Z}\ell_r(n)!,
\]
which is the desired formula.  
\end{proof}

Theorem~\ref{thm:periodic-reduction} reduces the permutation count to a purely finite-state word problem.
Fix $m$ and let $A_1$ (resp.\ $A_0$) be the $m\times m$ $0$-$1$ adjacency matrix of the directed graph of residue
transitions allowed when $i\in I$ (resp.\ when $i\notin I$), i.e.,
\[
(A_1)_{rs}=\mathbf{1}_{f(r,s)=1},\qquad (A_0)_{rs}=\mathbf{1}_{f(r,s)=0}.
\]
If one \emph{does not} fix content, then the number of residue words of length $n$ satisfying the transition rule
``use $A_1$ at steps in $I$ and $A_0$ at steps outside $I$'' is 
$\mathbf{1}^\top A_{\varepsilon_1}A_{\varepsilon_2}\cdots A_{\varepsilon_{n-1}}\mathbf{1}$, where
$\varepsilon_i=\mathbf{1}_{i\in I}$.
This is a standard transfer-matrix computation for regular languages \cite{flajolet-sedgewick}.

Fixing the content vector $(\ell_r(n))_{r\in\Z/m\Z}$ is the only genuinely new ingredient.
One convenient way to incorporate content is to weight each letter $r$ by a variable $y_r$ (we will expand on this in
Theorem~\ref{thm:quasipolynomial}), but now the transition matrices, call them $M_1$ and $M_0$ respectively, look like:
\[
M_1(y)_{rs}=\mathbf{1}_{f(r,s)=1} \cdot y_s,\qquad M_0(y)_{rs}=\mathbf{1}_{f(r,s)=0} \cdot y_s.
\]
Then the multivariate generating function for residue words satisfying the $I$-dependent rule is
\[
x\,\mathbf{y}^\top\Bigl(M_{\varepsilon_1}(y)\,M_{\varepsilon_2}(y)\cdots M_{\varepsilon_{n-1}}(y)\Bigr)\mathbf{1},
\]
where $\mathbf{y}^T=(y_0,\ldots,y_{m-1})$. Extracting the coefficient of $y_0^{\ell_0(n)}\cdots y_{m-1}^{\ell_{m-1}(n)}$ produces $\abs{\mathcal{W}_{f,I}(n)}$.
For fixed $m$ and fixed $I$, this is computable, though it
typically does not simplify to a ``closed form.''  

Finally, if the set of required descent positions $I$ itself has a regular description (for example, if the indicator
$\mathbf{1}_{i\in I}$ is eventually periodic in $i$), then one can often sum over $n$ and obtain a genuinely rational
(or at least $D$-finite) generating function in $x$ for the family $\{d_X(I;n)\}_{n\ge 0}$ by standard 
techniques \cite{flajolet-sedgewick}.  We do not pursue this direction here, but it suggests that periodicity in $X$
can be fruitfully combined with periodicity in the \emph{descent pattern} itself.

\begin{example} Let $m=2$ and let $X$ consist of all pairs $(a,b)$ with $a$ even and $b$ odd. We compute $d_X(\emptyset;n)$ using Theorem~\ref{thm:periodic-reduction}. First, we have $f:(\mathbb{Z}/2\mathbb{Z})^2 \to \{0,1\}$ is given by $f(0,1)=1$ and $f(r,s)=0$ otherwise. Now we determine $\mathcal{W}_{f,\emptyset}(n)$. If $w=w_1w_2 \cdots w_n$ is a word over $\mathbb{Z}/2\mathbb{Z}$, the requirement $f(w_i,w_{i+1})=1$ if and only if $i \in I$ implies $f(w_i,w_{i+1})=0$ for all $i \in [n-1]$. Therefore $\mathcal{W}_{f,\emptyset}(n)$ consists only of the unique length $n$ word over $\mathbb{Z}/2\mathbb{Z}$ starting with $\ell_1(n)$ ones and ending with $\ell_0(n)$ zeros. So, $\abs{\mathcal{W}_{f,\emptyset}(n)}=1$, and therefore Theorem~\ref{thm:periodic-reduction} gives the explicit formula
\[
d_X(\emptyset;n)=\ell_0(n)!\,\ell_1(n)! = \Bigl\lfloor\tfrac{n}{2}\Bigr\rfloor!\,\Bigl\lceil\tfrac{n}{2}\Bigr\rceil!.
\]
In particular, even with a very simple periodic rule, $d_X(\emptyset;n)$ can grow on the order of
$\bigl(\tfrac{n}{2}\bigr)!^2$, illustrating again that polynomiality in $n$ is not the generic behavior outside the
standardization-invariant setting.
\end{example}

\section{Computing $d_X(\emptyset;n)$}\label{sec:compute}

The previous section indicates that, particularly in the standardization-invariant case (which is most ubiquitous in application), the crux to computing $d_X(I;n)$ is determining the values $d_X(\emptyset;\cdot)$. This section is devoted to insight on this computation.

\subsection{Hamiltonian Paths}\label{sec:hamps}

 We illustrate that $d_X(\emptyset;n)$ is enumerated by Hamiltonian paths in a certain digraph. This is conceptually useful yet presents a practical challenge: counting Hamiltonian paths is \#P-complete for general digraphs, so one should not expect $d_X(\emptyset;n)$
to be easy for an arbitrary $X \subseteq \mathbb{N}^2$ \cite{garey-johnson,bang-jensen-gutin-digraphs}.

\begin{proposition}\label{prop:recursion}
Let $X\subseteq\N^2$ and $n\ge 1$. Define a directed graph $G_n(X)$ on vertex set $[n]$ by
\[
(i,j)\in E(G_n(X))\quad\Longleftrightarrow\quad (i,j)\notin X,\ \ i\neq j.
\]
Then $d_X(\emptyset;n)$ equals the number of Hamiltonian paths in $G_n(X)$.
\end{proposition}

\begin{proof}
By the construction of \(G_n(X)\) we have
\[
 (\pi_i,\pi_{i+1})\in E(G_n(X))
 \Longleftrightarrow\quad
 \pi_i\neq \pi_{i+1}\ \text{ and }\ (\pi_i,\pi_{i+1})\notin X.
\]
Therefore \(\pi\in\mathcal{D}_X(\emptyset;n)\) if and only if
\(\pi_1\to\pi_2\to\cdots\to\pi_n\) is a directed path in \(G_n(X)\). Since $\pi$ is a permutation of $[n]$, this establishes a bijection between \(\mathcal{D}_X(\emptyset;n)\) and the set of Hamiltonian paths in \(G_n(X)\), and the result follows.
\end{proof}

\medskip

Despite Hamiltonian paths being difficult to count in general, there are effective formulas for certain classes of digraphs. For instance, Grinberg and Stanley introduced the R\'edei-Berge symmetric function $U_D$ attached to a directed graph $D$, whose specializations count Hamiltonian paths and related objects \cite{grinberg-stanley-redei-berge}.
In tournaments, this yields a striking positive formula in terms of permutations and their cycle structures, which we
record next.

\begin{theorem}[Grinberg-Stanley]\label{thm:tournament}
Let $D$ be a tournament on vertex set $[n]$, and let $\overline{D}$ denote its complement (i.e.\ all edges reversed).
For $w\in\Sset_n$, let $\mathrm{nsc}(w)$ be the number of nontrivial cycles in the disjoint cycle decomposition of $w$.
Then the number of Hamiltonian paths in $D$ equals
\[
\sum_{w} 2^{\mathrm{nsc}(w)},
\]
where the sum ranges over all permutations $w\in\Sset_n$ of odd order (equivalently, all cycle lengths of $w$ are odd)
such that every nontrivial cycle of $w$ is a directed cycle of $\overline{D}$.
\end{theorem}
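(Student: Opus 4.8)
The plan is to pass from the tournament $D$ to a signed adjacency matrix and detect Hamiltonian paths by a vanishing trick. For $i\neq j$ set $B_{ij}=+1$ if $(i,j)\in D$ and $B_{ij}=-1$ if $(i,j)\in\overline{D}$, so that $B$ is antisymmetric with $\pm1$ entries off the diagonal. The key observation is that for any $\pi\in\Sset_n$ the product $\prod_{i=1}^{n-1}(1+B_{\pi_i\pi_{i+1}})$ equals $2^{n-1}$ when every consecutive pair $(\pi_i,\pi_{i+1})$ lies in $D$ (each factor is $1+1$) and equals $0$ otherwise (some factor is $1-1$). Writing $h(D)$ for the number of Hamiltonian paths of $D$ and summing over $\Sset_n$, I obtain the exact identity
\[
2^{n-1}\,h(D)=\sum_{\pi\in\Sset_n}\prod_{i=1}^{n-1}\bigl(1+B_{\pi_i\pi_{i+1}}\bigr),
\]
which is the object I would analyze.

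Next I would expand the right-hand side over the $2^{n-1}$ choices of a subset $S\subseteq[n-1]$ of ``$B$-steps.'' The chosen steps cut each one-line word into maximal runs, so the expansion reorganizes as a sum over ordered sequences of disjoint nonempty blocks covering $[n]$, each block $T$ weighted by the open-path sum $L(T)=\sum\prod B$ taken over the linear arrangements of $T$ (with $L$ of a singleton equal to $1$). Grouping the orderings of a fixed partition, this yields $2^{n-1}h(D)=\sum_{\mathcal{P}=\set{T_1,\dots,T_p}}p!\,\prod_{t}L(T_t)$, the outer sum ranging over all set partitions $\mathcal{P}$ of $[n]$.

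The first genuinely useful reduction is a sign-reversing involution. Reversing the linear order of a block of size $k$ multiplies its $B$-weight by $(-1)^{k-1}$, because $B$ is antisymmetric; since a sequence of distinct elements of length at least $2$ is never a palindrome, this involution has no fixed points on blocks of even size. Hence $L(T)=0$ for every even block, and only partitions into odd blocks survive. This already forces the ``odd order'' condition in the statement, and I expect this step to be routine.

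The main obstacle is the final resummation. The surviving sum $\sum_{\mathcal{P}\ \mathrm{odd}}p!\,\prod_t L(T_t)$ carries signs — a transitive triangle contributes $L=-2$ while a cyclic triangle contributes $L=+6$ — together with block-ordering multiplicities, and it does \emph{not} agree with the target $2^{n-1}\sum_w 2^{\mathrm{nsc}(w)}$ partition-by-partition (already for $n=3$ the two groupings distribute the total $12$ differently, even though both give $h(D)=1+2=3$ for a cyclic triangle). I would resolve this by passing to generating functions: the ordered-set-partition structure turns the sum into a resolvent (geometric-series) generating function $1/(1-\ell)$ in the single-block weight $\ell$, and re-expanding this series is what closes the open paths into cycles. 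The two delicate points are (i) showing that after this closure only \emph{monochromatic} cycles survive, and that these are the directed cycles of $\overline{D}$ rather than of $D$ — an asymmetry produced entirely by the signs $B_{ij}=-1$ on $\overline{D}$-edges — and (ii) extracting the exact weight $2$ per nontrivial cycle (the two surviving orientations of each odd cycle). This is exactly the content encoded by the power-sum expansion of the R\'edei--Berge symmetric function $U_D$ of \cite{grinberg-stanley-redei-berge}, so I would either reconstruct that expansion directly or design a second, dedicated involution on the signed block-orderings that cancels every contribution not supported on a disjoint union of odd $\overline{D}$-cycles and recovers the factor $2^{\mathrm{nsc}(w)}$. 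Pinning down the signs and that factor of $2$ is where the real work lies.
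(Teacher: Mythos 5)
The paper does not actually prove this theorem; it is imported verbatim from Grinberg--Stanley \cite{grinberg-stanley-redei-berge} and used as a black box, so your attempt has to stand on its own. The parts you do carry out are correct. The identity $2^{n-1}h(D)=\sum_{\pi\in\Sset_n}\prod_{i=1}^{n-1}(1+B_{\pi_i\pi_{i+1}})$ holds since each factor is $2$ or $0$ according to whether the step is a $D$-edge; the expansion over subsets $S\subseteq[n-1]$ does reorganize into $\sum_{\mathcal{P}}p!\prod_t L(T_t)$ over set partitions of $[n]$; and the reversal involution correctly annihilates every block of even size (antisymmetry gives the sign $(-1)^{k-1}$, and no arrangement of two or more distinct labels is a palindrome), which cleanly yields the odd-order restriction. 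Your sample values $L=-2$ for the transitive triangle and $L=+6$ for the cyclic triangle check out, as does the partition-by-partition mismatch you observe for $n=3$.

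The proof is nonetheless incomplete, and the gap is exactly where the content of the theorem sits. Everything distinctive about the conclusion --- that the surviving terms are indexed by permutations whose nontrivial cycles are directed cycles of $\overline{D}$ rather than of $D$, and that each such cycle contributes a factor of exactly $2$ --- resides in the resummation you defer. The obstruction is structural: your left-hand side is a sum over \emph{sequences} of blocks weighted by signed open-path sums $L(T)$ (a geometric, $1/(1-\ell)$-type structure), while the target $\sum_w 2^{\mathrm{nsc}(w)}$ is a sum over \emph{sets} of disjoint cycles weighted multiplicatively (an exponential-type structure). Passing from one to the other is precisely the power-sum expansion of the R\'edei--Berge symmetric function, and it requires either a logarithm/exponential manipulation of the block generating function or a second sign-reversing involution on signed block-orderings --- neither of which you construct. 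You have reduced the theorem to the identity $\sum_{\mathcal{P}\ \mathrm{odd}}p!\prod_t L(T_t)=2^{n-1}\sum_w 2^{\mathrm{nsc}(w)}$ and correctly diagnosed why it is delicate (the $D$ versus $\overline{D}$ asymmetry and the factor of $2$ per cycle), but diagnosing the difficulty is not the same as resolving it; as written, the argument establishes only the odd-order half of the statement.
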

Theorem~\ref{thm:tournament} immediately implies two classical facts about tournaments.
First, every tournament has a Hamiltonian path: the identity permutation is always admissible in the sum (it has no
nontrivial cycles), so the right-hand side is at least $1$.
Second, the number of Hamiltonian paths in a tournament is always odd: the identity contributes $1$, while every other
admissible permutation has $\mathrm{nsc}(w)\ge 1$ and therefore contributes an even number to the sum.
These statements are usually attributed to R\'edei (existence) and Berge (oddness); see \cite{moon-tournaments} and the
historical discussion in \cite{grinberg-stanley-redei-berge}. Grinberg and Stanley's result applies directly to our count as follows:

\begin{theorem}\label{thm:hampath}
Suppose that for each $n$ the directed graph $G_n(X)$ is a tournament.
Then
\[
d_X(\emptyset;n)
=
\sum_{w} 2^{\mathrm{nsc}(w)},
\]
where the sum ranges over all $w\in\Sset_n$ of odd order such that every nontrivial cycle of $w$ is a directed cycle of
$\overline{G_n(X)}$.
\end{theorem}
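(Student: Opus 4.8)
The plan is to obtain this statement as an immediate composition of Proposition~\ref{prop:recursion} and Theorem~\ref{thm:tournament}, with no new combinatorial input required. First I would invoke Proposition~\ref{prop:recursion} to translate the enumeration problem: for every $n$, the quantity $d_X(\emptyset;n)$ equals the number of Hamiltonian paths in the directed graph $G_n(X)$. This reduction holds with no hypothesis on $X$ whatsoever, so the only role of the standing assumption is to put $G_n(X)$ into a form to which the Grinberg--Stanley formula applies.

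Next I would apply Theorem~\ref{thm:tournament} with $D \coloneqq G_n(X)$. The hypothesis guarantees that $D$ is a tournament, so the theorem evaluates the number of Hamiltonian paths in $D$ as $\sum_w 2^{\mathrm{nsc}(w)}$, the sum ranging over permutations $w \in \Sset_n$ of odd order all of whose nontrivial cycles are directed cycles of $\overline{D}$. Since $\overline{D} = \overline{G_n(X)}$ by definition of the complement, substituting this back yields exactly the claimed expression, and combining with the first step identifies it with $d_X(\emptyset;n)$.

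The argument is short enough that there is essentially no obstacle beyond bookkeeping, but the one point I would state carefully is the verification that the tournament hypothesis is the correct one to impose. From the edge rule $(i,j) \in E(G_n(X)) \iff (i,j) \notin X$ and $i \neq j$, one sees that $G_n(X)$ is a tournament precisely when for every pair of distinct $i,j \in [n]$ exactly one of $(i,j),(j,i)$ lies in $X$; recording this makes the meaning of the complement $\overline{G_n(X)}$ appearing in the sum unambiguous. No cancellation, induction, or generating-function manipulation is needed, since the entire content is carried by the two cited results; the proof is therefore a direct chaining of the Hamiltonian-path interpretation with the tournament formula.
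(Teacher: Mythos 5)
Your proposal is correct and follows exactly the paper's own argument: apply Proposition~\ref{prop:recursion} to identify $d_X(\emptyset;n)$ with the number of Hamiltonian paths in $G_n(X)$, then invoke Theorem~\ref{thm:tournament} with $D=G_n(X)$. The extra remark unpacking when $G_n(X)$ is a tournament in terms of $X$ is a harmless (and accurate) clarification.
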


\begin{proof}
By Proposition~\ref{prop:recursion}, the number \(d_X(\emptyset;n)\) is the number of Hamiltonian paths
in the directed graph \(G_n(X)\).  Now apply Theorem~\ref{thm:tournament} with $D=G_n(X)$ and the result follows.
\end{proof}

\begin{example}
Let $X=\set{(i,j)\in\N^2:\ i<j}$. Then $G_n(X)$ has an edge $i\to j$ exactly when $i>j$, so $G_n(X)$ is the transitive
tournament and has a unique Hamiltonian path $n\to(n-1)\to\cdots\to 1$. Hence by Proposition~\ref{prop:recursion}, $d_X(\emptyset;n)=1$ for all $n$. Theorem~\ref{thm:hampath} predicts this: $\overline{G_n(X)}$ has no nontrivial directed cycles so the only summand in $ \sum_w 2^{\text{nsc}(w)}$ is the contribution of the identity in $e \in \mathfrak{S}_n$, which has no nontrivial cycles in its disjoint cycle decomposition so $\text{nsc}(e)=0$.
\end{example}

\begin{corollary}\label{cor:tournamentpoly}
If $G_n(X)$ is a tournament for each $n$, then $d_X(\emptyset;n)$ is odd for every $n$.
If, in addition, $G(X)\coloneqq\bigcup_{n\ge 1}G_n(X)$ has no directed $3$-cycle, then for all $n$ and all
$I\subseteq[n-1]$ we have $d_X(I;n)=d(I;n)$.
\end{corollary}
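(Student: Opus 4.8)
The oddness claim is immediate from the apparatus already developed. Since each $G_n(X)$ is a tournament, Theorem~\ref{thm:hampath} gives $d_X(\emptyset;n)=\sum_w 2^{\mathrm{nsc}(w)}$, the sum ranging over admissible $w$ of odd order. The identity permutation is always admissible—it has no nontrivial cycles, so the cycle condition holds vacuously and it has order $1$—and contributes $2^0=1$; every other admissible $w$ has $\mathrm{nsc}(w)\ge 1$ and so contributes an even number. Hence the total is odd, recovering the Berge oddness noted after Theorem~\ref{thm:tournament}.

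For the second assertion, the strategy is to show that the additional hypothesis rigidifies $X$ into an order relation, after which $X$-descents become ordinary ascents (or descents) for a suitable total order. First I would \emph{globalize}: since $G_n(X)$ is a tournament for every $n$, for each pair of distinct $i,j\in\N$ exactly one of $(i,j),(j,i)$ lies in $X$, so $G(X)=\bigcup_{n}G_n(X)$ is a tournament on $\N$ whose induced subtournament on $[n]$ is $G_n(X)$. A directed cycle of $G(X)$ uses only finitely many vertices, hence lives in some $G_n(X)$, so ``no directed odd cycle in $G(X)$'' is the same as ``no $G_n(X)$ has a directed odd cycle.''

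The crux is the next step: I claim ``no directed odd cycle'' forces $G(X)$ to be \emph{transitive}. I would invoke the classical lemma that any tournament containing a directed cycle also contains a directed $3$-cycle (take a shortest directed cycle; if its length exceeds $3$, the tournament edge between two vertices two apart either shortens the cycle or exhibits a $3$-cycle, contradicting minimality). Since a $3$-cycle is odd, the hypothesis excludes every directed cycle, and an acyclic tournament is transitive. Thus $G(X)$ is the transitive tournament of a strict total order $\prec$ on $\N$; equivalently $X=\set{(i,j):i\prec j}$, reading the orientation one way, or $X=\set{(i,j):i\succ j}$, the other.

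It then remains to translate and reconcile. Writing $\sigma\colon([n],\prec)\to([n],<)$ for the order isomorphism, the map $\pi\mapsto\sigma\circ\pi$ is a bijection of $\Sset_n$ sending each $\prec$-ascent of $\pi$ to an ordinary ascent of $\sigma\circ\pi$, so $\XD(\pi)=I$ if and only if $\Des(\sigma\circ\pi)=[n-1]\setminus I$; counting gives $d_X(I;n)=d([n-1]\setminus I;n)$ in one orientation and, by the symmetric computation with descents in place of ascents, $d_X(I;n)=d(I;n)$ in the other. Finally the classical complementation bijection $\pi\mapsto(n{+}1-\pi_1,\dots,n{+}1-\pi_n)$ on $\Sset_n$ interchanges ascents and descents and yields $d(I;n)=d([n-1]\setminus I;n)$, so the two descriptions coincide and the stated disjunction holds. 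The main obstacle is the transitivity step: everything hinges on the graph-theoretic fact that forbidding odd directed cycles in a tournament already forbids the length-$3$ cycle and hence all cycles, collapsing $G(X)$ to a total order; once that rigidity is in hand, the relabeling bijection and the descent symmetry are routine.
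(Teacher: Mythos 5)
Your proof is correct, and for the second assertion it takes a genuinely different route from the paper's. On the oddness claim you derive Berge's parity statement from Theorem~\ref{thm:hampath} (the identity contributes $2^0=1$ and every other admissible permutation contributes an even number), which is precisely the observation recorded after Theorem~\ref{thm:tournament}; the paper's proof of the corollary instead just cites Berge together with Proposition~\ref{prop:recursion}, so this half is essentially equivalent. For the second assertion, both arguments begin by reducing ``odd-cycle-free tournament'' to ``transitive'' (your shortest-cycle/$3$-cycle lemma is the standard way to do this), but they then diverge. The paper asserts a rigidity: that when passing from $G_n(X)$ to $G_{n+1}(X)$ the new vertex must land at the top or bottom of the topological order, forcing $X$ to be exactly $\set{(i,j):i<j}$ or $\set{(i,j):i>j}$. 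That intermediate claim does not actually follow from the hypotheses --- the transitive tournament realizing the total order $1\triangleleft 3\triangleleft 2\triangleleft 4\triangleleft 5\triangleleft\cdots$ has no directed cycles at all, yet inserts $3$ strictly between $1$ and $2$. Your argument handles an \emph{arbitrary} total order $\prec$ on $\N$ and transports $\prec$-descents to ordinary descents via the order isomorphism $\sigma:([n],\prec)\to([n],<)$; this relabeling bijection is exactly what is needed to cover all cases, and it yields $d_X(I;n)=d(I;n)$ or $d_X(I;n)=d([n-1]\setminus I;n)$ according to the orientation. Your closing remark that the two alternatives coincide anyway, since complementation gives $d(I;n)=d([n-1]\setminus I;n)$, is also correct. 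In short, your approach buys generality where it is genuinely required: it proves the stated conclusion for every odd-cycle-free tournament $G(X)$, not only for the two ``monotone'' relations the paper's rigidity step would permit.
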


\begin{proof}
Assume first that \(G_n(X)\) is a tournament for each \(n\). Berge's classical result \cite{moon-tournaments,grinberg-stanley-redei-berge} tell us that $G_n(X)$ has an odd number of Hamiltonian paths, so the result follows by Proposition~\ref{prop:recursion}.

Now assume in addition that the infinite digraph $G(X)=\bigcup_{n\ge 1}G_n(X)$ has no directed $3$-cycle. Define a relation $\prec$ on $\mathbb{N}$ given by $i \prec j$ if and only if $i \to j$. We claim $\prec$ is a strict total order. Indeed since $G_n(X)$ is a tournament for $n>\max(i,j)$, one of $i \prec j$ or $j \prec i$ holds. It only remains to prove transitivity. Suppose $i \prec j$ and $j \prec k$. Then $i \to j$ and $j \to k$. The absence of $3$-cycles in $G(X)$ omits $k \to i$ as a possibility, so $i \to k$, and hence $i \prec k$. Now fix $n$. List the elements of $\{1,2,\ldots,n\}$ in increasing order $v_1 \prec v_2 \prec \ldots \prec v_n$ with respect to $\prec$. Define the map $\rho:\{1,2,\ldots,n\} \to \{1,2,\ldots,n\}$ given by $\rho(v_j)=j$, so $i \prec j$ if and only if $\rho(i)<\rho(j)$. Now given a permutation $\pi = \pi_1\pi_2 \cdots \pi_n \in \mathfrak{S}_n$, relabel each entry according to its $\rho$ value: $\pi_{\rho} = \rho(\pi_1)\rho(\pi_2) \cdots \rho(\pi_n)$. Then we have
\[
i \in \XD(\pi) \Longleftrightarrow (\pi_i,\pi_{i+1}) \in X \Longleftrightarrow \pi_{i+1} \prec \pi_i \Longleftrightarrow \rho(\pi_{i+1}) < \rho(\pi_i) \Longleftrightarrow i \in \text{Des}(\pi_{\rho}).
\]
So, the bijection on $\mathfrak{S}_n$ sending $\pi \to \pi_{\rho}$ for any $\pi$ induces a bijection $\mathcal{D}_X(I;n) \to \mathcal{D}(I;n)$ and hence $d_X(I;n)=d(I;n)$.
\end{proof}

We close with a general expression for the number of Hamiltonian paths in an arbitrary digraph, due to
Grinberg-Stanley \cite[Thm.~6.6]{grinberg-stanley-redei-berge}. An augmented version of it together with Proposition~\ref{prop:recursion} yields an explicit, albeit computationally expensive, formula for $d_X(\emptyset;n)$.

For set up, let $D=(V,A)$ be a directed graph on a finite vertex set $V$. Write $\overline{D}=(V,\overline{A})$ for its loopless complement, i.e.\ $\overline{A}=V^2\setminus(A\cup\Delta)$ where
$\Delta=\{(v,v):v\in V\}$. A cyclic list $(v_1,\dots,v_k)$ of distinct vertices is called a \emph{$D$-cycle} if $(v_i,v_{i+1})\in A$ for all $i$
(indices mod $k$). Let $\Sset_V$ be the permutations of $V$. If $(\sigma_{i_1} \cdots \sigma_{i_k})$ is a cycle in the disjoint cycle decomposition of a permutation $\sigma \in \mathfrak{S}_V$ then we say it is a $D$-cycle if the cyclic list $(\sigma_{i_1}, \cdots \sigma_{i_k},\sigma_{i_1})$ is. Define
\[
\widetilde{\Sset}_V(D,\overline{D})\coloneqq\set{\sigma\in\Sset_V:\ \text{every non-trivial cycle of $\sigma$ is a $D$-cycle or a $\overline{D}$-cycle}}.
\]
For $\sigma\in\Sset_V$, set
\[
\varphi_D(\sigma)\coloneqq \sum_{\substack{\gamma\in\mathrm{Cycs}(\sigma)\\ \gamma\text{ is a $\overline{D}$-cycle}}} (\abs{\gamma}-1),
\]
where $\mathrm{Cycs}(\sigma)$ is the set of nontrivial cycles in the disjoint cycle decomposition of $\sigma$. The following is an immediate reformulation of Grinberg and Stanley's formula for the number of Hamiltonian paths in $D$:

\begin{theorem}[Grinberg-Stanley]\label{thm:darij}
Let $D=(V,A)$ be a directed graph on a finite vertex set $V$. Then the number of Hamiltonian paths in $D$ is
\[
\sum_{\sigma\in \widetilde{\Sset}_V(D,\overline{D})} (-1)^{\varphi_D(\sigma)}.
\] \qed
\end{theorem}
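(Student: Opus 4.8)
The plan is to prove Theorem~\ref{thm:darij} by an inclusion--exclusion over the adjacency pattern of a linear order, followed by a sign-reversing involution that converts the resulting ``linear'' expansion into the ``cyclic'' sum over $\Sset_V(D,\overline{D})$. Write $a_{uv}=\mathbf 1[(u,v)\in A]$ and $\overline a_{uv}=\mathbf 1[(u,v)\in\overline{A}]$, so that $a_{uv}+\overline a_{uv}=1$ whenever $u\neq v$. A Hamiltonian path of $D$ is exactly a permutation $w=w_1\cdots w_n\in\Sset_V$ (in one-line notation) all of whose consecutive pairs are arcs of $D$, so the number of Hamiltonian paths is
\[
H(D)=\sum_{w\in\Sset_V}\ \prod_{i=1}^{n-1} a_{w_i w_{i+1}}
=\sum_{w\in\Sset_V}\ \prod_{i=1}^{n-1}\bigl(1-\overline a_{w_i w_{i+1}}\bigr).
\]
First I would expand the product and group terms by the set $S\subseteq[n-1]$ of positions at which the factor $\overline a_{w_iw_{i+1}}$ is selected, yielding $H(D)=\sum_{S}(-1)^{\abs{S}}N(S)$, where $N(S)$ counts linear orders whose every $S$-adjacent pair is an arc of $\overline{D}$.

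Next I would read off the block structure encoded by $S$. The complementary cut positions break $w$ into an ordered list of $k$ consecutive blocks $B_1,\dots,B_k$, and within each block every consecutive pair is forced to be a $\overline{D}$-arc; hence each block is an ordered Hamiltonian path of $\overline{D}$ on its vertex set, while the arcs bridging consecutive blocks are unconstrained. Since $\abs{S}=\sum_j(\abs{B_j}-1)=n-k$, this rewrites $H(D)$ as a signed sum, with sign $(-1)^{\,n-k}$, over all ordered set partitions $(B_1,\dots,B_k)$ of $V$, each block weighted by its number of directed $\overline{D}$-Hamiltonian paths. The task is then to match this ``ordered partitions into $\overline{D}$-paths with free gaps'' sum against $\sum_{\sigma\in\Sset_V(D,\overline{D})}(-1)^{\varphi(\sigma)}$, whose exponential-formula expansion weights a block $B$ of size $\ge 2$ by $c_D(B)+(-1)^{\abs{B}-1}c_{\overline{D}}(B)$ (the numbers of directed $D$- and $\overline{D}$-Hamiltonian cycles on $B$), consistent with $(-1)^{\varphi(\sigma)}=\prod_{\gamma}(-1)^{\abs{\gamma}-1}$ over the $\overline{D}$-cycles $\gamma$ of $\sigma$.

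The crux --- and the step I expect to be the main obstacle --- is the passage from the linear/path data to the cyclic/permutation data. Concretely, I would build a sign-reversing involution on the set of decorated ordered partitions arising above: each $\overline{D}$-path block must be closed into a $\overline{D}$-cycle (absorbing the sign $(-1)^{\abs{B}-1}$), while the free bridging arcs together with the ordering of the blocks must reorganize into $D$-cycles, with every term that fails to assemble into a genuine $\sigma\in\Sset_V(D,\overline{D})$ cancelled in sign-reversing pairs. Carrying this out is the combinatorial core of the R\'edei--Berge identity, and an attractive alternative is to package the whole computation in the symmetric-function framework of Grinberg and Stanley: establish the power-sum expansion of the R\'edei--Berge function $U_D$ indexed by $\Sset_V(D,\overline{D})$ and then extract the Hamiltonian-path count by the appropriate specialization. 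As a consistency check, restricting to the case where $D$ is a tournament (so $\overline{D}$ has reversed arcs and the only admissible cycles have odd length) must collapse the signed sum to the positive formula $\sum_w 2^{\mathrm{nsc}(w)}$ of Theorem~\ref{thm:tournament}.
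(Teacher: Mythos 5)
The paper does not actually prove this statement: it is imported verbatim from Grinberg--Stanley \cite[Thm.~6.6]{grinberg-stanley-redei-berge} and stated with an immediate \verb|\qed|, so there is no internal argument to compare yours against. Judged on its own terms, your proposal is not yet a proof. The first half is correct and routine: writing $a_{uv}=1-\overline a_{uv}$, expanding, and grouping by the selected positions $S$ does give
\[
H(D)\;=\;\sum_{k\ge 1}\ \sum_{(B_1,\dots,B_k)}(-1)^{\,n-k}\prod_{j=1}^{k}p_{\overline{D}}(B_j),
\]
a signed sum over ordered set partitions of $V$ whose blocks carry Hamiltonian paths of $\overline{D}$. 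Your identification of the target sum's block weights ($c_D(B)+(-1)^{\abs{B}-1}c_{\overline D}(B)$ for $\abs{B}\ge 2$, via multiplicativity of $(-1)^{\varphi(\sigma)}$ over cycles) is also right. But the entire content of the theorem is the equality between these two expansions, and that is exactly the step you defer: the ``sign-reversing involution'' that closes each $\overline{D}$-path block into a $\overline{D}$-cycle while reorganizing the block ordering and the unconstrained bridging arcs into $D$-cycles is never defined. You do not specify the set on which the involution acts, which objects survive as fixed points, or why the fixed points are precisely the elements of $\Sset_V(D,\overline{D})$ with the stated sign. Announcing that such an involution exists is not different in kind from asserting the theorem.

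A secondary concern: the left-hand expansion above depends only on $\overline{D}$, whereas the right-hand side mixes $D$-cycles and $\overline{D}$-cycles, so any correct matching must explain where the $D$-cycle contributions come from (they must be generated by the free bridging arcs together with the cyclic reassembly of the block order); your sketch does not indicate how this happens, and this is where a naive involution attempt is most likely to break. Your fallback --- routing the argument through the power-sum expansion of the R\'edei--Berge function $U_D$ and then specializing --- is the route Grinberg and Stanley actually take, but invoking it amounts to citing the result rather than proving it. Either supply the involution in full detail (including the fixed-point analysis and the sign bookkeeping $(-1)^{n-k}=\prod_j(-1)^{\abs{B_j}-1}$), or present the argument as a reduction to the cited theorem rather than as a proof.
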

It immediately follows that $d_X(\emptyset;n)$ can be computed as follows.
\begin{corollary}
\[
d_X(\emptyset;n) = \sum_{\sigma\in \widetilde{\Sset}_V(G_n(X),\overline{G_n(X)})} (-1)^{\varphi_{G_n(X)}(\sigma)}.
\]
\end{corollary}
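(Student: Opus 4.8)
The plan is to obtain this statement as an immediate consequence of two results already in hand: Proposition~\ref{prop:recursion}, which identifies $d_X(\emptyset;n)$ with a Hamiltonian-path count, and Theorem~\ref{thm:darij}, the Grinberg--Stanley formula for the number of Hamiltonian paths in an arbitrary digraph. First I would invoke Proposition~\ref{prop:recursion} to rewrite $d_X(\emptyset;n)$ as the number of Hamiltonian paths in the directed graph $G_n(X)$ on vertex set $[n]$. Next I would check that the hypotheses of Theorem~\ref{thm:darij} are met with the choice $D=G_n(X)$: the theorem requires only that the vertex set be finite, which holds since here $V=[n]$. Specializing Theorem~\ref{thm:darij} to $D=G_n(X)$ then expresses that same Hamiltonian-path count as $\sum_{\sigma\in \Sset_V(G_n(X),\overline{G_n(X)})}(-1)^{\varphi(\sigma)}$, and chaining the two identities yields the claimed formula.

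Since the statement is a direct corollary of previously established results, there is no genuine mathematical obstacle; the entire content is the substitution $D=G_n(X)$, $V=[n]$. The only points demanding care are bookkeeping ones: I would make sure the complement $\overline{G_n(X)}$ and the statistic $\varphi$ are read off exactly as in the set-up preceding Theorem~\ref{thm:darij} (in particular that $\overline{G_n(X)}$ excludes the diagonal and that $\varphi$ sums $\abs{\gamma}-1$ only over the nontrivial $\overline{G_n(X)}$-cycles of $\sigma$), and that the vertex set is consistently taken to be $[n]$. With these alignments the two invocations compose without further argument.
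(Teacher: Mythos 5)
Your proposal is correct and matches the paper's (implicit) argument exactly: the paper presents this corollary as an immediate consequence of Proposition~\ref{prop:recursion} combined with Theorem~\ref{thm:darij} applied to $D=G_n(X)$, which is precisely your chain of substitutions. No further comment is needed.
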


The general formulas here experience combinatorial explosion when attempting to compute with them for general sets $X$. Our focus in the rest of this section is to compute $d_X(\emptyset;n)$ in cases when $X$ affords the computation to be more tractable. We start with periodic sets $X$, which allow for transfer matrix methods to compute $d_X(\emptyset;n)$.

\subsection{Periodic relations and transfer matrices.}
We expand the theory developed in Theorem~\ref{thm:periodic-reduction} to compute the numbers $d_X(\emptyset;n)$ by an explicit transfer-matrix encoding. The crux of this lies in the following theorem.

\begin{theorem}\label{thm:quasipolynomial}
Let $m\ge 1$ and suppose $X$ is periodic modulo $m$ with indicator $f:(\Z/m\Z)^2\to\{0,1\}$.
Let $H$ be the directed graph on vertex set $\Z/m\Z$ with an edge $r\to s$ iff $f(r,s)=0$.
For $n\ge 1$ write $\ell_r(n)=\abs{\{t\in[n]:t\equiv r\pmod m\}}$.
Let $A_H(\ell_0,\dots,\ell_{m-1})$ be the number of words $w=w_1\cdots w_n$ over $\Z/m\Z$ such that $\abs{\{i:w_i=r\}}=\ell_r(n)$ for all $r$ and $w_i\to w_{i+1}$ is an edge of $H$ for all $i$.
Then
\begin{equation}\label{eq:HKcore}
d_X(\emptyset;n)=A_H\!\bigl(\ell_0(n),\dots,\ell_{m-1}(n)\bigr)\cdot \prod_{r\in\Z/m\Z}\ell_r(n)!.
\end{equation}
Moreover, the multivariate generating function
\[
F(x;y_0,\dots,y_{m-1})
\coloneqq
\sum_{n\ge 1}\ \sum_{\ell_0+\cdots+\ell_{m-1}=n}
A_H(\ell_0,\dots,\ell_{m-1})\,
x^n\,y_0^{\ell_0}\cdots y_{m-1}^{\ell_{m-1}}
\]
is a rational function.
\end{theorem}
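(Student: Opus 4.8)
The plan is to prove the two claims of Theorem~\ref{thm:quasipolynomial} in sequence. The first claim, equation~\eqref{eq:HKcore}, is essentially the special case $I=\emptyset$ of Theorem~\ref{thm:periodic-reduction}, so I would establish it by direct appeal to that theorem. Indeed, when $I=\emptyset$ the defining condition on a word $w\in\mathcal{W}_{f,\emptyset}(n)$ is that $f(w_i,w_{i+1})=1$ holds for \emph{no} index $i\in[n-1]$, i.e.\ $f(w_i,w_{i+1})=0$ for every $i$, which is exactly the condition that each consecutive pair $w_i\to w_{i+1}$ is an edge of the digraph $H$. Hence $\mathcal{W}_{f,\emptyset}(n)$ coincides with the set of content-$(\ell_0(n),\dots,\ell_{m-1}(n))$ walks of length $n$ in $H$, whose number is by definition $A_H(\ell_0(n),\dots,\ell_{m-1}(n))$. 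Substituting $\abs{\mathcal{W}_{f,\emptyset}(n)}=A_H(\ell_0(n),\dots,\ell_{m-1}(n))$ into the formula of Theorem~\ref{thm:periodic-reduction} yields~\eqref{eq:HKcore} immediately.

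\medskip
The substantive part is the rationality of $F(x;y_0,\dots,y_{m-1})$, and here the plan is a transfer-matrix argument. I would introduce the weighted $m\times m$ transfer matrix $M(y)$ with entries $M(y)_{rs}=\mathbf{1}_{f(r,s)=0}\,y_s$, so that $M(y)_{rs}$ is nonzero exactly when $r\to s$ is an edge of $H$ and it records the weight $y_s$ of stepping onto a letter $s$. The key bookkeeping observation is that for a walk $w_1\cdots w_n$ in $H$, the product of edge weights $\prod_{i=1}^{n-1}M(y)_{w_iw_{i+1}}=y_{w_2}y_{w_3}\cdots y_{w_n}$ tracks all letters except the first, so I would absorb the first letter by an initial weight vector. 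Concretely, letting $\mathbf{y}$ denote the column vector with entries $y_r$ and $\mathbf{1}$ the all-ones vector, the generating polynomial for length-$n$ walks weighted by $\prod_r y_r^{(\text{number of }r\text{'s})}$ is $\mathbf{y}^\top M(y)^{n-1}\mathbf{1}$, whose coefficient of $y_0^{\ell_0}\cdots y_{m-1}^{\ell_{m-1}}$ is precisely $A_H(\ell_0,\dots,\ell_{m-1})$ when $\ell_0+\cdots+\ell_{m-1}=n$.

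\medskip
With this in hand, the rationality follows from summing the matrix geometric series. Writing $F(x;y)=\sum_{n\ge 1}x^n\,\mathbf{y}^\top M(y)^{n-1}\mathbf{1}$, I would factor out one power and recognize the remaining sum as a Neumann series:
\[
F(x;y)=x\,\mathbf{y}^\top\Bigl(\sum_{n\ge 1}x^{n-1}M(y)^{n-1}\Bigr)\mathbf{1}
=x\,\mathbf{y}^\top\bigl(I-x\,M(y)\bigr)^{-1}\mathbf{1}.
\]
Since $\bigl(I-xM(y)\bigr)^{-1}$ has entries that are rational functions in $x,y_0,\dots,y_{m-1}$ with common denominator $\det\bigl(I-xM(y)\bigr)$ (by Cramer's rule, each entry is a cofactor polynomial divided by this determinant), the quadratic form $x\,\mathbf{y}^\top(I-xM(y))^{-1}\mathbf{1}$ is a rational function, which is the assertion.

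\medskip
The main obstacle I anticipate is not any single hard step but rather the care needed in the bookkeeping of weights and indices: one must verify that the weighted walk-count $\mathbf{y}^\top M(y)^{n-1}\mathbf{1}$ correctly records content (including the subtle point that the initial letter is supplied by $\mathbf{y}^\top$ rather than by a matrix factor), and that the Neumann series identity $\sum_{k\ge 0}(xM)^k=(I-xM)^{-1}$ is being used as an identity of formal power series in $x$ with coefficients in the polynomial ring $\Z[y_0,\dots,y_{m-1}]$, so that no analytic convergence hypothesis is required. Once these formal-power-series and linear-algebra conventions are fixed, the rationality is a routine consequence of Cramer's rule, exactly as in the standard transfer-matrix method for regular languages~\cite{flajolet-sedgewick}.
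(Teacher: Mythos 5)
Your proposal is correct and follows essentially the same route as the paper: equation~\eqref{eq:HKcore} is obtained by specializing Theorem~\ref{thm:periodic-reduction} to $I=\emptyset$, and rationality is obtained via the weighted transfer matrix $M(y)_{rs}=\mathbf{1}_{f(r,s)=0}\,y_s$ together with the Neumann series $(I-xM(y))^{-1}$ and Cramer's rule. In fact your bookkeeping of the initial letter's weight via the row vector $\mathbf{y}^\top$ is slightly more careful than the paper's, which ends with $x\,\mathbf{1}^\top(I-xM(y))^{-1}\mathbf{1}$ despite claiming to have inserted the missing factor $y_{w_1}$ --- a harmless slip for the rationality claim, but your version is the one whose coefficient of $y_0^{\ell_0}\cdots y_{m-1}^{\ell_{m-1}}$ literally equals $A_H(\ell_0,\dots,\ell_{m-1})$.
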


\begin{proof}
The identity~\eqref{eq:HKcore} is the specialization \(I=\emptyset\) of Theorem~\ref{thm:periodic-reduction}: when \(I=\emptyset\), the condition \(f(w_i,w_{i+1})=1\) never occurs, so the
allowed residue transitions are exactly those with \(f(r,s)=0\), i.e.\ the edges of the digraph \(H\).
Thus \(\abs{\mathcal{W}_{f,\emptyset}(n)}\) from Theorem~\ref{thm:periodic-reduction} is precisely
\(A_H(\ell_0(n),\dots,\ell_{m-1}(n))\), and multiplying by \(\prod_r \ell_r(n)!\) gives~\eqref{eq:HKcore}.

For the rationality statement, we write down the standard transfer-matrix encoding with letter-weights. Let \(M(y)\) be the \(m\times m\) matrix indexed by residues \(r,s\in\Z/m\Z\) with entries
\[
 M(y)_{rs}\coloneqq \mathbf{1}_{(r\to s)\in E(H)}\cdot y_s.
\]
If we view \(y_s\) as the weight of writing the next letter \(s\), then a product
\(M(y)_{w_1w_2}M(y)_{w_2w_3}\cdots M(y)_{w_{n-1}w_n}\) contributes the monomial \(y_{w_2}\cdots y_{w_n}\) (and is zero
unless all transitions \(w_i\to w_{i+1}\) are edges of \(H\)). Write $\mathbf{y}^T=(y_0,\ldots,y_{m-1})$. The factor $\mathbf{y}^T$ supplies the weight of the inital letter $w_1$. Summing over all choices of \(w_1\) and \(w_n\) for \(n\ge 1\) we have
\[
 \mathbf{y}^T M(y)^{\,n-1}\mathbf{1}
 =
 \sum_{\substack{w=w_1\cdots w_n\\ w_i\to w_{i+1}\text{ in }H}}
 y_{w_2}\cdots y_{w_n}.
\]
It follows then that
\[
 F(x;y_0,\ldots,y_{m-1})=\sum_{n\ge 1} x^n\,\mathbf{y}^T M(y)^{\,n-1}\mathbf{1}
 =
 x\,\mathbf{y}^T\Bigl(\sum_{t\ge 0} (xM(y))^{t}\Bigr)\mathbf{1}
 =
 x\,\mathbf{y}^T(I-xM(y))^{-1}\mathbf{1}.
\]
Since \((I-xM(y))^{-1}\) has entries that are rational functions of \(x\) and \(y_0,\dots,y_{m-1}\), so does \(F\).
Finally, extracting the coefficient of \(y_0^{\ell_0}\cdots y_{m-1}^{\ell_{m-1}}\) recovers
\(A_H(\ell_0,\dots,\ell_{m-1})\) by construction, completing the proof.
\end{proof}

For fixed $m$, the quantity $A_H(\ell_0,\dots,\ell_{m-1})$ can be computed by a simple recursion over the content
vector.  For $r\in\Z/m\Z$ and a vector $\ell=(\ell_0,\dots,\ell_{m-1})$ with $\ell_r\ge 1$, let $e_r$ be the $r$th unit
vector and define
\[
A_H(\ell;r)\coloneqq \#\{\text{admissible words with content }\ell\text{ that end in }r\}.
\]
Then
\[
A_H(\ell;r)=\sum_{s:\,s\to r\text{ in }H} A_H(\ell-e_r;s),
\]
with base cases $A_H(e_r;r)=1$ and $A_H(\ell;r)=0$ if any coordinate of $\ell$ is negative.
Finally $A_H(\ell)=\sum_r A_H(\ell;r)$.  This is the usual transfer-matrix recursion specialized to fixed content
\cite{flajolet-sedgewick,stanley-ec1}.  When $m$ is fixed and $\ell_0+\cdots+\ell_{m-1}=n$, the number of states is on
the order of $n^{m-1}$, so this yields a polynomial-time algorithm in $n$ (for fixed $m$), albeit with exponent
depending on $m$.

We make a note about the generating function 
\[
F(x;y_0,\ldots,y_{m-1})=x\,\mathbf{y}^\top(I-xM(y))^{-1}\mathbf{1}.
\]
This is rational in $x$ and the variables $y_0,\dots,y_{m-1}$, so its coefficients in the multivariate Taylor expansion
satisfy strong algebraic-differential constraints.  A particularly useful general fact is that the diagonal of a
rational power series is $D$-finite (equivalently, its coefficients are \emph{$P$-recursive}).  To connect this to $d_X(\emptyset;n)$, note that in~\eqref{eq:HKcore} we do \emph{not} take an arbitrary coefficient
of $F$, but the coefficient corresponding to the specific content vector
\[
\bigl(\ell_0(n),\dots,\ell_{m-1}(n)\bigr),
\qquad
\ell_r(n)=\abs{\{t\in[n]:t\equiv r\pmod m\}},
\]
which varies with $n$ in a periodic way.
Such extractions can still be expressed using generalized diagonals and root-of-unity filters,
a standard trick in multivariate generating function theory and analytic combinatorics in several variables
\cite{flajolet-sedgewick,pemantle-wilson}.  In particular, for fixed $m$ the sequence
$n\mapsto A_H(\ell_0(n),\dots,\ell_{m-1}(n))$ is $P$-recursive, hence so is $d_X(\emptyset;n)$ after dividing out the
explicit factorial product.

\begin{example}\label{ex:alternating-parity}
Let $m=2$ and define $X$ by $(a,b)\in X$ iff $a\equiv b\pmod 2$.
Then $\XD(\pi)=\emptyset$ means the allowed transitions in $H$ are $0\to 1$ and $1\to 0$ only. The corresponding word count is
$A_H(\ell_0,\ell_1)=0$ unless $|\ell_0-\ell_1|\le 1$; for the specific content vector coming from $[n]$ this condition
always holds, and we get $A_H=2$ when $n$ is even and $A_H=1$ when $n$ is odd.
Therefore
\[
d_X(\emptyset;n)=
\begin{cases}
2\left(\frac{n}{2}!\right)^2 & n \text{ even},\\[4pt]
\left(\frac{n-1}{2}\right)! \cdot \left(\frac{n+1}{2}\right)! & n \text{ odd}.
\end{cases}
\]
This is another instance where a simple periodic $X$ yields a compact closed form for $d_X(\emptyset;n)$.
\end{example}
\begin{example}Let $m=3$ and define $X$ by
\[
(a,b)\in X\quad\Longleftrightarrow\quad b\equiv a+1\pmod 3.
\]
Thus an $X$-descent occurs exactly when consecutive entries advance by one step along the directed $3$-cycle
$0\to 1\to 2\to 0$ in $H$.
For $I=\emptyset$, Theorem~\ref{thm:quasipolynomial} says that $d_X(\emptyset;n)$ is obtained by counting residue
words with the same content as the multiset of residues of $\{1,\dots,n\}$, and avoiding the three forbidden transitions
$0\to 1$, $1\to 2$, and $2\to 0$, then multiplying by the factorial product $\ell_0(n)!\ell_1(n)!\ell_2(n)!$.

The resulting residue-word count is quite nontrivial. One can directly compute the values below:
\[
\begin{array}{c|c|c|c}
n & (\ell_0(n),\ell_1(n),\ell_2(n)) & A_H(\ell_0,\ell_1,\ell_2) & d_X(\emptyset;n)\\ \hline
1 & (0,1,0) & 1 & 1 \\
2 & (0,1,1) & 1 & 1 \\
3 & (1,1,1) & 3 & 3 \\
4 & (1,2,1) & 4 & 8 \\
5 & (1,2,2) & 6 & 24 \\
6 & (2,2,2) & 12 & 96 \\
7 & (2,3,2) & 19 & 456 \\
8 & (2,3,3) & 33 & 2376 \\
9 & (3,3,3) & 66 & 14256 \\
10 & (3,4,3) & 111 & 95904 \\
\end{array}
\]
Here $\ell_r(n)=\abs{\{t\in[n]:t\equiv r\pmod 3\}}$ and $H$ is the digraph obtained by deleting the edges of the directed
$3$-cycle. This illustrates two general features of periodic relations: the ``hard part'' is the constrained word count, while the
factorial product simply accounts for permuting labels within each residue class.
\end{example}

\subsection{Successions.}
Certain classes of sets $X$ give rise to classical techniques for computing $d_X(\emptyset;n)$. For instance, let $X=\set{(a,a+1):a\ge 1}$.
Then $\XD(\pi)=\emptyset$ means that $\pi$ has no \emph{successions}.
This family is classical and has a clean inclusion-exclusion enumeration.

\begin{theorem}\label{thm:delta-egf-corrected}
Let $X=\set{(a,a+1):a\ge 1}$. Then for every $n\ge 1$,
\begin{equation}\label{eq:succ-IE}
d_X(\emptyset;n)=\sum_{k=0}^{n-1}(-1)^k \binom{n-1}{k}\,(n-k)!.
\end{equation}
\end{theorem}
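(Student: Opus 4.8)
The plan is to prove \eqref{eq:succ-IE} by inclusion--exclusion over the forbidden succession events. For each $a\in\brac{n-1}$, let $B_a\subseteq\Sset_n$ be the set of permutations in which $a$ is immediately followed by $a+1$; that is, $\pi_{i+1}=\pi_i+1=a+1$ for some position $i$. By the definition of $X$ we have $(\pi_i,\pi_{i+1})\in X$ exactly when $\pi_{i+1}=\pi_i+1$, so $\XD(\pi)=\emptyset$ if and only if $\pi$ avoids every $B_a$, i.e.\ $\mathcal{D}_X(\emptyset;n)=\Sset_n\setminus\bigcup_{a=1}^{n-1}B_a$. Inclusion--exclusion then gives
\[
d_X(\emptyset;n)=\sum_{T\subseteq\brac{n-1}}(-1)^{\abs{T}}\,\Bigl|\bigcap_{a\in T}B_a\Bigr|.
\]

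The heart of the argument is a uniform count: for every $T\subseteq\brac{n-1}$ with $\abs{T}=k$,
\[
\Bigl|\bigcap_{a\in T}B_a\Bigr|=(n-k)!.
\]
To establish this I would read each constraint $a\in T$ as an instruction to glue the value $a+1$ directly behind $a$, viewing $T$ as a set of edges $\{a,a+1\}$ in the path graph on vertex set $\{1,\dots,n\}$. Since this is a subgraph of a single path, it is a disjoint union of subpaths, so imposing the $k$ edges merges the $n$ values into exactly $n-k$ maximal blocks, each a contiguous increasing run $j(j+1)\cdots(j+s)$ whose internal order is completely forced. A permutation lies in $\bigcap_{a\in T}B_a$ precisely when these blocks appear as uninterrupted increasing runs, so such permutations are in bijection with linear arrangements of the $n-k$ blocks, of which there are $(n-k)!$.

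The step I expect to require the most care is verifying that the succession constraints never conflict, so that the block count is always exactly $n-k$ regardless of which size-$k$ set $T$ is chosen. This is where the special structure of $X$ is essential: each value $v$ is forced to have at most one successor (namely $v+1$, when $v\in T$) and at most one predecessor (namely $v-1$, when $v-1\in T$), and the forced pairs always chain together into increasing runs rather than creating contradictory demands or cycles. Once the uniform count is in hand, I would substitute it into the inclusion--exclusion sum and group the $\binom{n-1}{k}$ subsets $T$ of each fixed size $k$, obtaining
\[
d_X(\emptyset;n)=\sum_{k=0}^{n-1}(-1)^k\binom{n-1}{k}(n-k)!,
\]
which is exactly \eqref{eq:succ-IE}.
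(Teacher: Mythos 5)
Your proposal is correct and follows essentially the same route as the paper: inclusion--exclusion over the succession events, with the intersection over a size-$k$ set of constraints counted as $(n-k)!$. The paper leaves the block-gluing count as ``standard,'' whereas you spell it out; your justification via contiguous increasing runs merging $n$ values into $n-k$ blocks is exactly the right argument.
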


\begin{proof}
For \(\pi=\pi_1\cdots\pi_n\in\Sset_n\), an \(X\)-descent is exactly a \emph{succession} \(\pi_{i+1}=\pi_i+1\).
For each \(a\in[n-1]\), let \(E_a\) be the event that \(a\) is immediately followed by \(a+1\) somewhere in \(\pi\).
Then \(d_X(\emptyset;n)\) counts permutations in which none of the events \(E_a\) occur, so inclusion-exclusion gives
\[
d_X(\emptyset;n)=\sum_{T\subseteq[n-1]} (-1)^{\abs{T}}\,
\#\{\pi\in\Sset_n:\ E_a\text{ holds for all }a\in T\}.
\]
Now it is standard to see that if $|T|=k$, the summand indexed by $T$ is $(-1)^k(n-k)!$ and the result follows.
\end{proof}

\begin{example}Let $X=\{(a{+}1,a):a\ge 1\}$, so that $\XD(\pi)=\emptyset$ means that $\pi$ has no \emph{reverse successions}
(adjacent decreasing consecutive integers).
The map $\pi\mapsto (n{+}1-\pi_1)\cdots(n{+}1-\pi_n)$ is a bijection of $\Sset_n$ that swaps successions and reverse
successions, so the numbers of permutations avoiding successions and avoiding reverse successions coincide.
Hence the closed form and recurrence of Theorem~\ref{thm:delta-egf-corrected} applies.
\end{example}

Forbidding adjacencies of the form $(a,a+d)$ for $d$ in a finite set $\Delta$ leads to a rich family of problems.
As we saw when $\Delta=\{1\}$, inclusion-exclusion reduces this 
to the simple closed form~\eqref{eq:succ-IE}.  For larger $\Delta$, the forbidden adjacency graph becomes a
bounded-degree graph on $\{1,\dots,n\}$ whose connected components, called ``clusters'', can overlap in complicated ways.
In these settings, the Goulden-Jackson cluster method gives a systematic way to express the exponential generating
function in terms of cluster enumerators \cite{goulden-jackson,flajolet-sedgewick}; see also \cite{bona-permutations}
for many related consecutive-pattern computations.

\subsection{A probabilistic lower bound.}
Looking more generally, one wonders what the typical behaviour of $d_X(\emptyset,n)$ is. In that light, let $p\in(0,1)$ and build a random set $X\subseteq[n]^2$ by including each ordered pair $(i,j)$ with $i\neq j$ in
$X$ independently with probability $1-p$. Equivalently, $G_n(X)$ is the random directed graph $\vec{G}(n,p)$ where each
directed edge appears independently with probability $p$ (no loops). Then $d_X(\emptyset;n)$ is the random variable
counting Hamiltonian paths in $\vec{G}(n,p)$.

\begin{theorem}\label{thm:probabilistic}
Let $Y=d_X(\emptyset;n)$;  equivalently $Y$ is the number of
Hamiltonian paths in $\vec{G}(n,p)$.
Then for all $n$,
\[
\mathbb{P}\!\left(Y\ge \tfrac12\,n!\,p^{\,n-1}\right)\ge \tfrac14\,\exp(1-1/p).
\]
\end{theorem}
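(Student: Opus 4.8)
The plan is to run a second-moment (Paley--Zygmund) argument on $Y$, the number of Hamiltonian paths in $\vec{G}(n,p)$. Write $Y=\sum_{\sigma}\mathbf{1}_\sigma$, where $\sigma$ ranges over the $n!$ linear orderings of $[n]$ and $\mathbf{1}_\sigma$ is the indicator that the path $\sigma_1\to\cdots\to\sigma_n$ is present. Since such a path uses $n-1$ edges, each present independently with probability $p$, the first moment is immediately $\mathbb{E}[Y]=n!\,p^{\,n-1}$, which is exactly the threshold appearing in the statement. By the Paley--Zygmund inequality with parameter $\theta=\tfrac12$,
\[
\mathbb{P}\!\left(Y\ge\tfrac12\,\mathbb{E}[Y]\right)\ge \tfrac14\,\frac{\mathbb{E}[Y]^2}{\mathbb{E}[Y^2]},
\]
so it suffices to prove $\mathbb{E}[Y^2]/\mathbb{E}[Y]^2\le \exp(1/p-1)$.

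The heart of the argument is the second-moment computation, which I would reduce to a succession-counting problem. For two orderings $\sigma,\tau$, the event that both paths are present has probability $p^{\,\abs{E_\sigma\cup E_\tau}}=p^{\,2(n-1)-s(\sigma,\tau)}$, where $s(\sigma,\tau)=\abs{E_\sigma\cap E_\tau}$ counts shared directed edges. Relabeling vertices preserves shared edges, so for each fixed $\sigma$ the multiset $\{s(\sigma,\tau):\tau\in\Sset_n\}$ agrees with $\{s(\id,\tau):\tau\in\Sset_n\}$; taking $\sigma=\id$ (the path $1\to 2\to\cdots\to n$, whose edges are the pairs $(i,i+1)$), a shared edge is exactly an index $k$ with $\tau_{k+1}=\tau_k+1$. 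Thus $s(\id,\tau)$ is precisely the number of \emph{successions} $\mathrm{succ}(\tau)$ of $\tau$. Dividing through by $\mathbb{E}[Y]^2=(n!)^2 p^{\,2(n-1)}$ yields the clean identity
\[
\frac{\mathbb{E}[Y^2]}{\mathbb{E}[Y]^2}=\frac{1}{n!}\sum_{\tau\in\Sset_n}(1/p)^{\,\mathrm{succ}(\tau)}=\mathbb{E}\!\left[(1/p)^{\,\mathrm{succ}(\tau)}\right],
\]
the expectation taken over a uniform $\tau\in\Sset_n$.

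It then remains to bound this succession generating function. Setting $u=1/p-1>0$ and expanding via factorial moments, $\mathbb{E}[\binom{\mathrm{succ}}{k}]$ counts pairs consisting of a $k$-set of prescribed successions together with a permutation realizing them; since a permutation with $k$ prescribed successions is counted by $(n-k)!$ — the same gluing argument used in the proof of Theorem~\ref{thm:delta-egf-corrected} — I obtain $\mathbb{E}[\binom{\mathrm{succ}}{k}]=\binom{n-1}{k}(n-k)!/n!$. Summing $\mathbb{E}[(1/p)^{\mathrm{succ}}]=\sum_{k}\mathbb{E}[\binom{\mathrm{succ}}{k}]\,u^k$ and simplifying the coefficient $\binom{n-1}{k}(n-k)!/n!$ to $(1-k/n)/k!$ gives
\[
\mathbb{E}\!\left[(1/p)^{\,\mathrm{succ}}\right]=\sum_{k=0}^{n-1}\Bigl(1-\tfrac{k}{n}\Bigr)\frac{u^k}{k!}\le\sum_{k\ge 0}\frac{u^k}{k!}=e^{u}=\exp(1/p-1),
\]
using $0\le 1-k/n\le 1$ and $u>0$. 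Combined with the Paley--Zygmund bound and $\mathbb{E}[Y]=n!\,p^{\,n-1}$, this is exactly the claim.

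The main obstacle is the bookkeeping in the second moment: correctly identifying the edge-overlap statistic $s(\sigma,\tau)$ with the number of successions, and then recognizing that the resulting factorial moments are governed by the very counting $(n-k)!$ already established for succession-avoiding permutations. Once these two identifications are in place, the remaining estimates are elementary, and no delicate control of the variance is needed beyond the uniform bound $1-k/n\le 1$.
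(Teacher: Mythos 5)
Your proposal is correct and follows essentially the same route as the paper: first moment $n!\,p^{n-1}$, a second-moment bound via factorial moments of the edge-overlap statistic giving $\mathbb{E}[Y^2]/\mathbb{E}[Y]^2\le \exp(1/p-1)$, and Paley--Zygmund. The only cosmetic difference is that you relabel to reduce the overlap statistic to the succession count of a single uniform permutation, whereas the paper conditions on one of the two permutations and notes the conditional factorial moment $\binom{n-1}{k}(n-k)!/n!$ is independent of it --- the same computation in different clothing.
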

Theorem~\ref{thm:probabilistic} shows that in particular if $p \in (0,1)$ then with probability bounded away from $0$ (uniformly in $n$), the proportion of permutations in $\mathfrak{S}_n$ that have no $X$-descent set is at least
$\tfrac12\,p^{\,n-1}$. This is far more than polynomially many permutations in $n$.

\begin{proof}
Write \(\vec{G}(n,p)=G_n(X)\), and for \(\pi\in\Sset_n\) let \(\mathbf{1}_\pi\) be the indicator of the event that all
directed edges \((\pi_i,\pi_{i+1})\) for \(1\le i\le n-1\) appear in \(\vec{G}(n,p)\).
Then
\[
Y=\sum_{\pi\in\Sset_n}\mathbf{1}_\pi.
\]
For a fixed \(\pi\), these \(n-1\) edges are distinct and occur independently with probability \(p\), so
\(\mathbb{E}[\mathbf{1}_\pi]=p^{\,n-1}\), and hence \(\mathbb{E}[Y]=n!\,p^{\,n-1}\). Now let \(\pi,\sigma\in\Sset_n\) and write \(E(\pi)=\{(\pi_i,\pi_{i+1}):1\le i\le n-1\}\) for the set
of directed edges used by \(\pi\).
If \(Z=|E(\pi)\cap E(\sigma)|\) is the number of common edges, then
\[
|E(\pi)\cup E(\sigma)|=2(n-1)-Z,
\]
so by independence of edges in \(\vec{G}(n,p)\),
\[
\mathbb{P}(\mathbf{1}_\pi=\mathbf{1}_\sigma=1)=p^{\,2(n-1)-Z}=p^{\,2(n-1)}\,p^{-Z}.
\]
Averaging over \(\pi\) and \(\sigma\) gives
\begin{equation}\label{eq:second-moment-factor}
\mathbb{E}[Y^2]=(n!)^2\,p^{\,2(n-1)}\,\mathbb{E}\!\left[p^{-Z}\right]=\mathbb{E}[Y]^2\,\mathbb{E}\!\left[p^{-Z}\right],
\end{equation}
where on the right \(Z\) is computed for two independent uniform permutations.

We now bound \(\mathbb{E}[p^{-Z}]\) uniformly in \(n\).
Set \(t\coloneqq 1/p>1\).  Since \(t^Z=(1+(t-1))^Z\), we have the binomial expansion
\[
t^Z=\sum_{k=0}^{Z}\binom{Z}{k}(t-1)^k,
\qquad\text{hence}\qquad
\mathbb{E}[t^Z]=\sum_{k\ge 0}(t-1)^k\,\mathbb{E}\!\left[\binom{Z}{k}\right].
\]
To  compute \(\mathbb{E}[\binom{Z}{k}]\), condition on \(\pi\). Let $S \subseteq E(\pi)$ have size $k$.
Then for a uniform random \(\sigma\in\Sset_n\), it is immediate that
\(\mathbb{P}(S\subseteq E(\sigma))=(n-k)!/n!\). Since the set \(E(\pi)\) has size \(n-1\), this implies
\[
\mathbb{E}\!\left[\binom{Z}{k}\,\middle|\,\pi\right]
=
\binom{n-1}{k}\,\frac{(n-k)!}{n!}
=
\frac{n-k}{n}\cdot\frac{1}{k!},
\]
and the right-hand side does not depend on \(\pi\), so the same formula holds unconditionally.
It follows that
\[
\mathbb{E}[p^{-Z}]=\mathbb{E}[t^Z]
=\sum_{k=0}^{n-1}(t-1)^k\,\frac{n-k}{n}\cdot\frac{1}{k!}
\le \sum_{k\ge 0}\frac{(t-1)^k}{k!}
=\exp(t-1)=\exp(1/p-1).
\]
Combining this with~\eqref{eq:second-moment-factor} yields \(\mathbb{E}[Y^2]\le \exp(1/p-1)\,\mathbb{E}[Y]^2\).
The Paley-Zygmund inequality now gives
\[
\mathbb{P}\!\left(Y\ge \tfrac12\,\mathbb{E}[Y]\right)
\ge
\frac{(1/2)^2\,\mathbb{E}[Y]^2}{\mathbb{E}[Y^2]}
\ge \tfrac14\,\exp(1-1/p),
\]
and since \(\mathbb{E}[Y]=n!\,p^{\,n-1}\) the result follows.
\end{proof}

\section{Open Problems}\label{sec:open}

We conclude with list of directions suggested by our study.


\begin{question}Find broad, verifiable conditions on $X$ under which, for each fixed $I$, the sequence $n\mapsto d_X(I;n)$ is
eventually polynomial or quasipolynomial.
Periodic relations yield $P$-recursive sequences via rational generating functions as seen through Theorem~\ref{thm:quasipolynomial} and the discussion afterwards. Are there intermediate classes (e.g.\ relations defined by inequalities on a fixed number of base-$m$ digits, or by
finite automata on the binary expansions of labels) for which one can make conclusions?
\end{question}

\begin{question}For periodic $X$, the core difficulty is the constrained word count 
\[
A_H(\ell_0(n),\dots,\ell_{m-1}(n)).
\]
Can one obtain effective asymptotics for $d_X(\emptyset;n)$ (or more generally for $d_X(I;n)$) in terms of the
spectral data of the residue digraph $H$?
Analytic combinatorics in several variables provides general methods for asymptotics of diagonals and coefficient
extractions from rational functions \cite{pemantle-wilson,flajolet-sedgewick}.
Can these methods be made concrete for natural families of residue digraphs arising from arithmetic relations?
\end{question}

\begin{question} When $G_n(X)$ is a tournament for each $n$, $d_X(\emptyset;n)$ is always odd
(Corollary~\ref{cor:tournamentpoly}).  Which infinite tournaments $G(X)$ force stronger restrictions on
$d_X(\emptyset;n)$?
For example, if $G(X)$ is transitive then $d_X(I;n)$ reduces to ordinary descents (Corollary~\ref{cor:tournamentpoly}).
More generally, if $G(X)$ has bounded ``feedback'' structure (for instance, few directed cycles of certain types), can
one bound or describe the growth of $d_X(\emptyset;n)$?
How do such questions relate to classical extremal problems on tournaments \cite{moon-tournaments}?
\end{question}

\begin{question}Theorem~\ref{thm:probabilistic} gives a uniform positive lower bound on the probability that
$d_X(\emptyset;n)$ is within a constant factor of its expectation in the $\vec{G}(n,p)$ model.
Can one prove sharper concentration for $d_X(\emptyset;n)$, or determine the threshold behavior of
$\mathbb{P}(d_X(\emptyset;n)>0)$ as a function of $p=p(n)$?
These are natural analogues of Hamiltonicity and path-existence thresholds in random graphs
\cite{janson-randomgraphs,alon-spencer}, but the directed setting and the emphasis on \emph{paths} (rather than cycles)
changes the details.
\end{question}

\section*{Acknowledgments}
The author thanks the anonymous referees for thoughtful feedback that greatly improved the manuscript. The author is partially supported by research funds from York University, and NSERC Discovery Grant \#RGPIN-2025-06304.

\bigskip

%
%

\end{document}